\newcommand{\ep}{\varepsilon}
\newcommand{\dual}[1]{\langle{#1}\rangle}
\DeclareOldFontCommand{\rm}{\normalfont\rmfamily}{\mathrm}
\newcommand{\normx}[1]{\|{#1}\|_{\mathcal{X},\tau}}
\newcommand{\normxa}[1]{\|{#1}\|_{\mathcal{X},\tau,\star}}
\newcommand{\normy}[1]{\|{#1}\|_{\mathcal{Y},\tau}}
\newcommand{\normyb}[1]{\|{#1}\|_{\mathcal{Y},\tau,\#}}
\newcommand{\vnorm}[1]{|\hspace{-0.3mm}|\hspace{-0.3mm}|#1|\hspace{-0.3mm}|\hspace{-0.3mm}|}
\newcommand{\normxx}[1]{\vnorm{#1}_{\mathcal{X},\tau}}
\newcommand{\normxxa}[1]{\vnorm{#1}_{\mathcal{X},\tau,\star}}
\newcommand{\normyy}[1]{\vnorm{#1}_{\mathcal{Y},\tau}}
\newcommand{\normyyb}[1]{\vnorm{#1}_{\mathcal{Y},\tau,\#}}
\newcommand{\ns}[1]{#1}
\definecolor{cyan10}{cmyk}{.3,0,0,0}
\theoremstyle{plain}
\newtheorem{theorem}{Theorem}
\newtheorem{lemma}[theorem]{Lemma}
\theoremstyle{definition}
\newtheorem{remark}[theorem]{Remark}
\begin{document}

\title[Analysis of the DG time-stepping method]
{Variational analysis of the discontinuous Galerkin time-stepping method for
parabolic equations}
\author[N. Saito]{
Norikazu Saito}
\address{
Graduate School of Mathematical Sciences, The University of Tokyo\\
Komaba 3-8-1, Meguro-ku, Tokyo 153-8914, Japan}
\email{norikazu@g.ecc.u-tokyo.ac.jp}
\urladdr{http://www.infsup.jp/saito/index-e.html}
\date{\today}

\subjclass[2000]{
Primary~
65M12, 	
Secondary~
65M60, 	
}

\keywords{discontinuous Galerkin method, 
time discretization, 
parabolic equation}

\maketitle

\begin{abstract}
{The discontinuous Galerkin (DG) time-stepping method applied to abstract
  evolution equation of parabolic type is studied using a variational
  approach. We establish the inf-sup condition or
  Babu{\v s}ka--Brezzi condition for the DG
  bilinear form. Then, a nearly best approximation property and a
  nearly symmetric error estimate are obtained as corollaries. Moreover, the
  optimal order error estimates under appropriate regularity assumption
  on the solution are derived as direct applications of the standard
  interpolation error estimates. Our method of analysis is new \ns{for the DG time-stepping method; it differs from previous works by which the
  method is formulated as the one-step method}. We apply our abstract
  results to the finite element approximation of \ns{a second order parabolic equation with 
space-time variable coefficient functions} in a polyhedral domain, and derive the optimal order error estimates in several norms. 
}
\end{abstract}

\section{Introduction}
\label{sec:intro}

The discontinuous Galerkin (DG) time-stepping method, which is
designated below as the \emph{dG$(q)$ method}, is a time discretization
method using piecewise polynomials of degree $q$ with an integer $q\ge
0$. The dG$(q)$ method was proposed originally by \cite{lr74} for ordinary differential equations.
Galerkin time-stepping methods of other kinds were proposed earlier by
\cite{hul72b,hul72a}. 
In those works, the methods were formulated using continuous test functions and discontinuous trial functions; the  dG$(q)$ method uses discontinuous functions for both test and trial functions.   
Later, the
method was applied to space-time discretization method for the moving
boundary problem of the heat equation by \cite{jam78}.
Standard time-discretization methods are formulated as one-step or
multi-step methods: approximations are computed at nodal
points. By contrast, the dG$(q)$ method gives approximations as
piecewise polynomials so that approximations at arbitrary \ns{points} are available.
Therefore, the method is useful to address moving boundary
problems and a system composed of
equations having different natures. Indeed, the method is applied actively
to fluid--structure
interaction problems (see \cite{btt13}). 

It was described in \cite{lr74} that the dG$(q)$ method is
interpreted as an one-step method and that it is strongly A-stable of order
$2q+1$. Moreover, after applying a numerical
quadrature formula, the dG$(q)$ method to $y'(t)=\lambda y(t)$ with a
scalar $\lambda$ was found to agree with the sub-diagonal $(q+1,q)$ Pad{\' e} rational
approximation of $e^{-z}$ (see \cite{lr74}). 
Particularly, the dG$(0)$ method implies the backward Euler method. 
For this reason, earlier studies of stability and convergence of the
dG$(q)$ method are accomplished by formulating the method as a
one-step method. However, this seems to make
analysis somewhat intricate, especially 
\ns{when the target equation has variable coefficients.} 

The purpose of this paper is to present a different approach: 
we study the $\textup{dG}(q)$ method using a variational approach 
\ns{based on the formulation of J. L. Lions}. 
To clarify the variational characteristics of the $\textup{dG}(q)$ method, 
we apply the method to abstract evolution equations of parabolic type 
(the coefficient might depend on the time). 
Then, the finite element approximation of a general second order 
parabolic equation is studied as an application of abstract results. 
The $\textup{dG}(q)$ method is the Galerkin approximation of the 
variational formulation of the equation and several techniques 
developed in the literature of the DG method are applicable. 
\ns{For example, we introduce several DG norms which depend on 
the time partitions and apply them to establish the (time) discrete 
inf-sup condition which is one of the main results of this paper. 
The discrete inf-sup condition plays a crucial role for analysis 
of the $\textup{dG}(q)$ method. We also use the (time) continuous 
inf-sup conditions for the analysis of the finite element approximation.} 
Consequently, the analysis becomes greatly simplified for 
\ns{problems with space-time variable coefficients and} for any $q$. 
\ns{Application of the inf-sup condition to numerical analysis for 
parabolic problems is not originally our idea. Many researchers 
have utilized the similar conditions for various problems: \cite{brk17}, \cite{lm17}, \cite{ss09}, \cite{and13}, \cite{up14}, \cite{ns19} and so on. 
However, we are not aware of any previous systematic use of the 
discrete and continuous inf-sup conditions for deducing nearly 
best approximation results together with optimal order error 
estimates for the $\textup{dG}(q)$ method and the finite 
element version of the $\textup{dG}(q)$ method.}

We first formulate the problem to be addressed.
Letting $H$ and $V$ be (real) Hilbert spaces such that $V\subset H$ are dense
with the continuous injection, then the inner product and norms are denoted
as $(\cdot,\cdot)=(\cdot,\cdot)_H$,
$(\cdot,\cdot)_V$, $\|\cdot\|=\|\cdot\|_H$ and $\|\cdot\|_V$. The topological dual spaces
$H$ and $V$ are denoted respectively as $H'$ and $V'$.
As usual, we identify $H$ with $H'$ and consider the triple $V\subset H\subset V'$. Moreover,
$\dual{\cdot,\cdot}=\mbox{}_{V'}\dual{\cdot,\cdot}_{V}$ represents the
duality pairing between $V'$ and $V$. 
Let $J=(0,T)$ with $T>0$.

Assuming that, for a.e.~$t\in J$, we are given a linear operator $A(t)$ of $V\to V'$, and assuming that there exist two positive constants
$M$ and $\alpha$ which are independent of $t\in J$ such that
\begin{subequations}
\begin{align}
 |\dual{A(t)w,v}|&\le M\|w\|_V\|v\|_V && (w,v\in V,~\mbox{a.e.~}t\in J),\label{eq:0a}\\
 \dual{A(t)v,v} &\ge \alpha \|v\|_V^2 && (v\in V,~\mbox{a.e.~}t\in J),\label{eq:0b}
\end{align}
\label{eq:00}
\end{subequations}
we consider the abstract evolution equation of parabolic type as 
\begin{equation}
u'+A(t)u=F(t),\quad t\in J;\qquad u(0)=u_0,
\label{eq:0}
\end{equation}
where $u'$ denotes $du(t)/dt$ and where $F:J\to V'$ and $u_0\in H$ are given
functions. \ns{Note that $\alpha$ and $M$ may depend on the final time $T$.}

Several frameworks and methods can be used to establish the well-posedness
(unique existence of a solution) of \eqref{eq:0}. For this study, we use the variational method of J.~L. Lions. 
To recall it, we require some additional notation. 
\ns{We use the Bochner spaces $L^q(J;U)$, $1\le q\le \infty$, and the Bochner--Sobolev spaces $H^m(J;U)$, $m\ge 1$, for a Banach space $U$. Their norm are defined as 
\begin{gather*}
\|v\|_{L^q(J;U)}^q = 
\int_J \|v(t)\|_U^q~dt \quad (1\le q<\infty),\qquad
\|v\|_{L^\infty(J;U)}=\sup_{t\in J}\|v(t)\|_U \quad (q=\infty),\\
\|v\|_{H^m(J;U)}^2 =\sum_{l=0}^m \|(d/dt)^l v\|_{L^2(J;U)}^2. 
\end{gather*}
}
Set 
\begin{align*}
& \mathcal{X}=L^2(J;V)\cap H^1(J;V'), && \|w\|_{\mathcal{X}}^2=\|w\|_{L^2(J;V)}^2+\|w'\|_{L^2(J;V')}^2,\\
& \mathcal{Y}_1=L^2(J;V),&&\|v\|_{\mathcal{Y}_1}^2 =\|v\|_{L^2(J;V)}^2,\\  
& \mathcal{Y}=\mathcal{Y}_1 \times H,&& \|(v_1,v_2)\|_{\mathcal{Y}}^2 =\|v_1\|_{L^2(J;V)}^2+\|v_2\|^2 . 
\end{align*}
The weak formulation of \eqref{eq:0} is stated as follows. Given
\begin{subequations} 
\label{eq:1}
\begin{equation}
 F\in L^2(J;V'),\qquad u_0\in H,
 \label{eq:data}
\end{equation}
find $u\in \mathcal{X}$ such that
\begin{equation}
B(u,v)=\int_J\dual{F,v_1}~dt
 + (u_0,v_2)\qquad (\forall v=(v_1,v_2)\in\mathcal{Y}),
\label{eq:1a}
\end{equation}
where
\begin{equation}
 B(w,v)=
 \int_J \left[
	 \dual{w',v_1}+\dual{A(t)w,v_1}\right]~dt+(w(0),v_2)
\label{eq:1b}
\end{equation}
for $w\in\mathcal{X}$ and $v=(v_1,v_2)\in\mathcal{Y}$.
\end{subequations}

The space $\mathcal{X}$ is embedded continuously in the set of
$H$-valued continuous functions on $\overline{J}$ (see Theorem XVIII-1 of \cite{dl92} \ns{for example}).  
In other words, there exists a positive constant $C_{\mathrm{Tr},T}$ depending only on $T$ such that 
  \begin{equation}
\max_{t\in \overline{J}}\|v(t)\|_H \le C_{\mathrm{Tr},T}\|v\|_{\mathcal{X}}\qquad
 (v\in \mathcal{X}).
  \label{eq:ch}
  \end{equation}
   Particularly, $w(0)\in H$ in \eqref{eq:1b} is
   well-defined. 

The bilinear form $B$ is bounded in $\mathcal{X}\times \mathcal{Y}$ as
\[
 \|B\|=\sup_{w\in\mathcal{X},v\in\mathcal{Y}}\frac{|B(w,v)|}{\|w\|_\mathcal{X}\|v\|_{\mathcal{Y}}}<\infty.
\]
Moreover, it is known that (see Theorem 6.6 of \cite{eg04}):  
\begin{subequations} 
 \label{eq:bnb}
\begin{gather}
\exists \beta>0,\quad \inf_{w\in\mathcal{X}}\sup_{v\in\mathcal{Y}}\frac{B(w,v)}{\|w\|_{\mathcal{X}}\|v\|_{\mathcal{Y}}}= \beta ; \label{eq:bnb1}\\
v\in \mathcal{Y},\quad (\forall w\in\mathcal{X},\ B(w,v)=0)\quad
 \Longrightarrow \quad (v=0), \label{eq:bnb2}
\end{gather}
\end{subequations}
\ns{where $\beta$ depends only on $\alpha$, $M$ and $C_{\textup{Tr},T}$.} 
Therefore, we can apply the Banach--Ne{\v c}as--Babu{\v s}ka theorem or
Babu{\v s}ka--Lax--Milgram theorem (see Theorem 2.6 of \cite{eg04} \ns{or} Theorem 5.2.1 of \cite{ba72} for example) to conclude that 
there exists a unique
$u\in\mathcal{X}$ satisfying \eqref{eq:1} and it satisfies
\[
 \|u\|_\mathcal{X}\le \ns{\frac{1}{\beta}}\left(\|F\|_{L^2(J;V')}+\|u_0\|\right).
\]
Actually, \eqref{eq:bnb} is the necessary and
sufficient condition for the well-posedness of \eqref{eq:1}. (The case $u_0=0$ is described explicitly
in \cite{eg04}. However, the modification to the case $u_0\ne 0$
is straightforward.) 
Equality \eqref{eq:bnb1} is commonly 
designated as the \emph{inf-sup condition} or \emph{Babu{\v s}ka--Brezzi
condition}.   
Furthermore, \eqref{eq:bnb} is equivalent to 
\begin{equation} 
 \label{eq:bnb3}
  \exists \beta>0,\quad
   \inf_{w\in\mathcal{X}}\sup_{v\in\mathcal{Y}}\frac{B(w,v)}{\|w\|_{\mathcal{X}}\|v\|_{\mathcal{Y}}}=
   \inf_{v\in\mathcal{Y}}\sup_{w\in\mathcal{X}}\frac{B(w,v)}{\|w\|_{\mathcal{X}}\|v\|_{\mathcal{Y}}}=
   \beta. 
\end{equation}
This equivalence is verified by considering the associating operators
with $B$ and the operator norms of their inverse operators.    
Indeed, this equivalence plays an important role in the discussion below.
 
The dG$(q)$ method described below (see \eqref{eq:2}) is based on
the formulation \eqref{eq:1}, which means that the dG$(q)$ method is
consistent with \eqref{eq:1} in the sense of Lemma \ref{la:2}. 
(The consistency is also called the Galerkin orthogonality.)
Therefore, it is natural to ask whether a discrete version of
\eqref{eq:bnb}, particularly \eqref{eq:bnb1}, is available. If it is established, then the
best approximation property and optimal order error estimates are
obtained as direct consequences.  

In this paper, after describing the dG$(q)$ method, we first
prove that there exists a positive constant
$c_1$ such that (see Theorem \ref{th:0}) 
\[
      \inf_{w_\tau\in\mathcal{S}_\tau}
       \sup_{v_\tau\in\mathcal{S}_\tau}\frac{B_\tau(w_\tau,v_\tau)}{\normx{w_\tau}\normyb{v_\tau}}=
       c_1
\]
which is a discrete version of \eqref{eq:bnb1}. Herein, $B_\tau$ is the DG
approximation of $B$. Also, $\mathcal{S}_\tau$ is the set of $V$-valued
piecewise polynomials of degree $q$ defined on a non-uniform partition of $J$
with size parameter $\tau>0$. Moreover, $\normx{\cdot}$ and $\normyb{\cdot}$
are the DG norms corresponding to $\|\cdot\|_{\mathcal{X}}$ and
$\|\cdot\|_{\mathcal{Y}}$, respectively. (The precise definition of these
symbols will be presented in Section \ref{sec:2}.)  
\ns{The DG norm $\normx{\cdot}$ is a simple discrete counterpart of the $\mathcal{X}$ norm, while the definition of $\normyb{\cdot}$ is somewhat strange.  
Actually, it is crucial to introduce $\normyb{\cdot}$ and 
a special projection operator $\pi_h$ defined as \eqref{eq:71} in deriving the discrete inf-sup condition above. 
}
Then, as a direct
consequence, we demonstrate that there exists a
positive constant $c_1'$ such that (see Theorem \ref{th:1}) 
\[
 \normx{u-u_\tau}\le c_1'\inf_{w_\tau\in\mathcal{S}_\tau}\normxa{u-w_\tau},  
\]
where \ns{$u$ and} $u_\tau\in\mathcal{S}_\tau$ respectively represent the solutions of \ns{\eqref{eq:1} and} the dG$(q)$ method. $\normxa{\cdot}$ denotes another DG norm corresponding to
$\|\cdot\|_{\mathcal{X}}$ satisfying $\normx{v_\tau}\le \normxa{v_\tau}$
for $v_\tau\in\mathcal{S}_\tau$. This result is neither a best
approximation property 
nor a symmetric error estimate in the sense of \cite{dl02}; it is only a
nearly best approximation property and nearly symmetric error
estimate. 
\ns{
However, the difference between $\normx{\cdot}$ and $\normxa{\cdot}$ is insignificant. In fact, using this result, together with 
the standard interpolation error estimates (see Lemma \ref{la:20}),}   
one can obtain optimal order error
estimates under appropriate regularity assumptions on $u$. \ns{That is, }we prove (see Theorem \ref{th:2})
\begin{equation*}
 \left(\sum_{n=0}^{N-1}\|u'-u_\tau'\|_{L^2(J_n;V')}^{2}\right)^{1/2} 
\ns{+\|u-u_\tau\|_{L^2(J;V)}}
\le c_3
  \tau^{q}
  \|u^{(q)}\|_{\mathcal{X}}
\end{equation*}
and 
\[
  \sup_{1\le n\le N}\|u(t_n)-u_\tau(t_n)\|+\|u-u_\tau\|_{L^2(J;V)}
  \le  c_4\tau^{q+1}\|u^{(q+1)}\|_{\mathcal{Y}_1}.
\]

Application of those results to concrete partial
differential equations is straightforward. As an illustration, we
consider the finite element approximation \ns{of a second-order parabolic equation \eqref{eq:h}} 
in a polyhedral domain $\Omega\subset\mathbb{R}^d$, $d=2,3$. 
\ns{The coefficient functions may depend on both space and time variables, while most of the previous studies dealt only with the heat equation.   
The space variable is
discretized by 
the conforming $\mathrm{P}^k$ finite element method, which is
designated below as the \emph{cG$(k)$ method}. We recall that the semi-discretization error $u-u_h$ is well studied so far, where $u$ and $u_h$ respectively represent the solutions of the parabolic equation and the resulting space semi-discrete equation by the cG$(k)$ method. 
However, we state (see Lemma \ref{la:22}) a version of the proof based on continuous inf-sup conditions for completeness. 
The dG$(q)$ method is applied to the space semi-discrete equation, which is designated below as the 
\emph{dG$(q)$cG$(k)$ method}. 
To apply the error estimates above, we have to study the stability for $\partial_t^qu_h$. 
As an application of the continuous inf-sup condition, we derive (see Lemma \ref{la:23}) 
 \begin{equation*}
     \|\partial_t^{q+1}u_h\|_{L^2(J;X_h')}^2 + 
\|u_h\|_{H^{q}(J;H^1_0(\Omega))}^2
      \le           C\left(\|\partial_t^{q+1}u\|_{L^2(J;H^{-1}(\Omega))}^2+
 \|u\|_{H^{q}(J;H^1_0(\Omega))}^2\right)    ,
\end{equation*}
where $X_h$ stands for the conforming $\mathrm{P}^k$ finite element space (the meaning of $X_h'$ should be paid a special attention. See Section \ref{sec:7}).  
Consequently, one of our final error estimates reads (see Theorem \ref{th:20})
\[
\sup_{1\le n\le N}\|u(t_n)-u_{h,\tau}(t_n)\|_{L^2(\Omega)}+ \|u-u_{h,\tau}\|_{L^2(J;H^1_0(\Omega))}\le c_6(h^k+\tau^{q+1})\|u\|_{{Z}_2},
\]
where $u_{h,\tau}$ denotes the solution of the $\mathrm{dG}(q)\mathrm{cG}(k)$ method and the quantity $\|u\|_{Z_2}$ contains norms of higher-order derivatives of $u$. 
The assumption $\|u\|_{Z_2}<\infty$ is optimal for deducing the convergence order $O(h^k+\tau^{q+1})$ in view of the standard interpolation error estimates. We also derive the optimal order error estimates for $\partial_tu_{h,\tau}$. In deriving these error estimates,  
time partition and space triangulation need not to be quasi-uniform. Moreover, our estimates include no log-factors and valid for any $q\ge 0$ and $k\ge 1$. 
The salient benefit of our analysis based on the continuous and discrete inf-sup conditions is its applicability for a wide range of problems in a coherent manner. In fact, we perform analysis for general equations with variable coefficients in almost the same way as that for the heat equation.
}

At this stage, let us briefly review previous studies of convergence of the
dG$(q)$ method for parabolic equations to clarify the novelty of our results.
As described above, \cite{jam78} studied space-time finite
element discretization for the heat equation defined in the moving
domain $Q_T=\cup_{t\in J}\{t\}\times \Omega(t)$; the space-time slab
$Q_n=\cup_{t\in (t_n,t_{n+1})}\{t\}\times \Omega(t)$ is discretized directly using a space-time
mesh with the size parameter $\rho$ and the conforming $\mathrm{P}^k$
finite element space on the mesh is considered. He proved $O(\rho^k)$
convergence results in the $L^2$ norm for time and $H^1$ norm for
space. $O(\rho^k)$ convergence results in the $L^\infty$ norm for time and $L^2$ norm for
space were also reported.
\cite{ejt85} described consideration of the abstract evolution equation
of the form \eqref{eq:1} in a Hilbert space $H$, where $A$ is assumed to
be independent of $t$, self-adjoint on $H$ and positive-definite with compact
inverse $A^{-1}$. The optimal convergence, say $O(\tau^{q+1})$
convergence, in the $L^\infty(J;H)$ and a super-convergence, say $O(\tau^{2q+1})$
convergence, are presented at nodal points in the $H$ norm.
For super-convergence, the initial value $u_0$ is expected to satisfy additional
boundary conditions as well as regularity conditions. They succeeded in
relaxing those conditions, but the resulting error estimates include the
log-factor of the form $\log(1/\tau)$. Applications to the finite
element method for the heat equation in a fixed smooth domain
$\Omega\subset\mathbb{R}^d$ are also discussed. They {studied the $\mathrm{dG}(q)\mathrm{cG}(k)$ method and} offered
optimal convergence results in the $L^\infty(J;L^2(\Omega))$ norm and
super-convergence results with the log-factor at time-nodal points.  
Those results were
extended to several directions in \cite[Chapter 12]{tho06}. 
\cite{ej91,ej95} examined adaptive algorithms
and a posteriori error
estimates for the $\mathrm{dG}(q)\mathrm{cG}(1)$ method for the heat
equation with $q=0,1$. They also proved several a priori estimates of
optimal order with the log-factor in, for example, the
$L^\infty(J;L^2(\Omega))$ and $L^\infty(J\times \Omega)$ norms.  
The $hp$-version of the dG$(q)$ method was studied carefully in \cite{ss00}. In particular, the exponential convergence results in the
$L^2(J;V)$ norm were proved using the graded time partitions. 
\cite{cw06} considered the
$\mathrm{dG}(q)\mathrm{cG}(k)$ method for \ns{a general parabolic equation and presented sharp error estimates using space-time local projection operators.}　In \cite{ej91}, \cite{ej95}, and \cite{cw06}, the finite element
spaces might be different at each time slab. \ns{\cite{mn06} studied a posteriori error estimates for both linear and nonlinear problems using 
the parabolic reconstruction operator.}  
\cite{lv16} proved a best approximation property
of the form
\[
 \|u-u_{h,\tau}\|_{L^\infty(J\times \Omega)}\le C\log (T/\tau) \log
 (1/h) \inf_{\chi\in X_{h,\tau}}\|u-\chi\|_{L^\infty(J\times\Omega)},
\]
where $u_{h,\tau}$ denotes the solution of the
$\mathrm{dG}(q)\mathrm{cG}(k)$ method for the heat equation in a convex
polyhedral domain $\Omega\subset\mathbb{R}^d$, $d=2,3$, and $X_{h,\tau}$
is the $\mathrm{dG}(q)\mathrm{cG}(k)$ finite element space. Therein, the
quasi-uniformity conditions were assumed both for time and space meshes.

This paper comprises four sections with an appendix. In Section
\ref{sec:2}, the dG$(q)$ method for \eqref{eq:1} and the main
results, Theorems \ref{th:0}--\ref{th:2}, are stated. The proof of
Theorems \ref{th:1} and \ref{th:2} are also described there. The proof
of Theorem \ref{th:0} is presented in Section \ref{sec:5}. Section
\ref{sec:7} is devoted to the study of the dG$(q)$cG$(k)$ method for 
\ns{a general second order parabolic equation with space-time variable coefficient functions. 
Stability and convergence results for the $\textup{cG}(k)$ method and the optimal order error estimates for the $\textup{dG}(q)\textup{cG}(k)$ method are presented there. 
The proof of some auxiliary results, the continuous inf-sup conditions, is given in Appendix \ref{sec:a}. }


\section{DG time-stepping method dG$(q)$ and main results}
\label{sec:2}

Let $N$ be a positive integer. 
We introduce $N+1$ distinct points $0=t_0<t_1<\cdots<t_n<\cdots<t_N=T$. Set $J_n=(t_n,t_{n+1}]$ and
$\tau_n=t_{n+1}-t_n$ for $n=0,\ldots,N-1$.

We consider the partitions of $J$ as $\Delta_\tau=\{J_0,\ldots,J_{N-1}\}$, where
$\displaystyle{\tau=\max_{0\le n\le N-1}\tau_n}$. Without loss of generality,
we assume that $\tau\le 1$. We set
\begin{equation}
C^0(\Delta_\tau;H)=\{v\in L^\infty(J;H) \mid v|_{J_n}\in C^0(J_n;H),\ 0\le n\le N-1\},
  \label{eq:C}
\end{equation}
where $C^0(J_n;H)$ denotes the set of $H$-valued continuous functions on
$J_n$. Spaces $C^0(\Delta_\tau;V)$ and $C^0(J_n;V)$ are defined
similarly. For arbitrary $v\in C^0(\Delta_\tau;H)$, we write
\begin{equation}
 v^{n,+}=\lim_{t\downarrow t_n}v(t),\quad v^{n+1}=v(t_{n+1})\qquad (n=0,\ldots,N-1).
  \label{eq:pm}
\end{equation}
Let $q\ge 0$ be an integer and set  
\begin{equation}
 \mathcal{S}_\tau=\mathcal{S}_\tau^q(H,V)=\{v\in C^0(\Delta_\tau;H) \mid
  v|_{J_n}\in\mathcal{P}^q(J_n;V),\ 0\le n\le N-1\}, 
  \label{eq:S}
\end{equation}
where $\mathcal{P}^q(J_k;V)$ denotes the set of $V$-valued polynomials
of $t\in J_n$ with degree $\le q$.

The DG time-stepping method dG$(q)$ is presented below.
Find $u_\tau\in\mathcal{S}_\tau$ such that
\begin{subequations} %
\label{eq:2}
\begin{equation}
B_\tau(u_\tau,v_\tau)=
 \int_J\dual{F,v_\tau}~dt
 + (u_0,v^{0,+}_\tau)\qquad (\forall v_\tau\in\mathcal{S}_\tau),
\label{eq:2a}
 \end{equation}
where
\begin{equation}
B_{\tau}(u_\tau,v_\tau)=  \sum_{n=0}^{N-1}\int_{J_n} \left[
	 (u_\tau',v_\tau)+\dual{A(t)u_\tau
	 ,v_\tau}\right]~dt+
 (u_\tau^{0,+},v_\tau^{0,+})
+\sum_{n=1}^{N-1}(u_\tau^{n,+}-u_\tau^n,v_\tau^{n,+}) .
 \label{eq:2b} 
\end{equation}
\end{subequations}
\ns{Applying integration by parts to the term $\int_{J_n}(u_\tau',v_\tau)~dt$, we obtain an} 
alternate expression
\begin{equation}
B_{\tau}(u_\tau,v_\tau)=  \sum_{n=0}^{N-1}\int_{J_n} \left[
	 -(u_\tau,v_\tau')+\dual{A(t)u_\tau
	 ,v_\tau}\right]~dt+
 (u_\tau^{N},v_\tau^{N})
+\sum_{n=1}^{N-1}(u_\tau^{n},v_\tau^{n}-v_\tau^{n,+}).
 \label{eq:2c} 
\end{equation}

Because the solution $u$ of \eqref{eq:1} is a function of
$C^0(\overline{J};H)$, we have $(u^{n,+}-u^n,v_\tau^{n,+})=0$ for $v_\tau\in
\mathcal{S}_\tau$. Consequently, we obtain the following result.   
\begin{lemma}[Consistency]
If $u\in\mathcal{X}$ and $u_\tau\in\mathcal{S}_\tau$ respectively represent the solutions of \eqref{eq:1} and \eqref{eq:2}, then we have 
$B_\tau(u-u_\tau,v_\tau)=0$ for any $v_\tau\in \mathcal{S}_\tau$.
\label{la:2}
\end{lemma}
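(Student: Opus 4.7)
The plan is to reduce $B_\tau(u,v_\tau)$ to the weak formulation \eqref{eq:1a} evaluated at an appropriate test pair, so that the consistency follows immediately from the two variational identities.

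First I would exploit the continuous embedding \eqref{eq:ch}: since $u\in\mathcal{X}\hookrightarrow C^0(\overline{J};H)$, the trace values $u^{n,+}=\lim_{t\downarrow t_n}u(t)$ and $u^n=u(t_n)$ coincide for every $n=0,\ldots,N-1$. In particular, $u^{0,+}=u(0)=u_0$ and each jump term $(u^{n,+}-u^n,v_\tau^{n,+})$ appearing in \eqref{eq:2b} vanishes. Note also that the $H$-pairing $(u',v_\tau)$ in \eqref{eq:2b} is naturally interpreted as the $V'$--$V$ duality pairing $\dual{u',v_\tau}$ when applied to $u\in\mathcal{X}$, since $v_\tau(t)\in V$ for a.e.~$t$; this extension is consistent with the $H$-inner product used on $\mathcal{S}_\tau$, because $V\subset H\subset V'$ and on $\mathcal{S}_\tau$ both interpretations agree.

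Next, for an arbitrary $v_\tau\in\mathcal{S}_\tau$, I would form the test pair $v=(v_\tau,v_\tau^{0,+})$, which lies in $\mathcal{Y}=L^2(J;V)\times H$ since $v_\tau\in L^2(J;V)$ and $v_\tau^{0,+}\in V\subset H$. With the observations above,
\[
B_\tau(u,v_\tau)=\sum_{n=0}^{N-1}\int_{J_n}\bigl[\dual{u',v_\tau}+\dual{A(t)u,v_\tau}\bigr]~dt+(u_0,v_\tau^{0,+})
=\int_J\bigl[\dual{u',v_\tau}+\dual{A(t)u,v_\tau}\bigr]~dt+(u_0,v_\tau^{0,+}),
\]
which is exactly $B(u,v)$ in the notation of \eqref{eq:1b}. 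Hence by \eqref{eq:1a}, $B_\tau(u,v_\tau)=\int_J\dual{F,v_\tau}\,dt+(u_0,v_\tau^{0,+})$.

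Finally, I would combine this with the definition \eqref{eq:2a} of $u_\tau$, which gives the same right-hand side $\int_J\dual{F,v_\tau}\,dt+(u_0,v_\tau^{0,+})$. Subtracting yields $B_\tau(u-u_\tau,v_\tau)=0$ for every $v_\tau\in\mathcal{S}_\tau$. There is no real obstacle here: the only point that requires a moment's care is the reinterpretation of the $H$-inner product $(u',v_\tau)$ as the duality pairing $\dual{u',v_\tau}$ when $u'\in L^2(J;V')$, and checking that $(v_\tau,v_\tau^{0,+})$ is an admissible element of $\mathcal{Y}$. Both are immediate from the set-up.
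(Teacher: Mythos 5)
Your proposal is correct and follows essentially the same route as the paper, which justifies the lemma by observing that $u\in\mathcal{X}\hookrightarrow C^0(\overline{J};H)$ forces the jump terms $(u^{n,+}-u^n,v_\tau^{n,+})$ to vanish, so that $B_\tau(u,v_\tau)$ collapses to $B(u,(v_\tau,v_\tau^{0,+}))$ and the claim follows by subtracting \eqref{eq:2a} from \eqref{eq:1a}. Your additional remarks on reading $(u',v_\tau)$ as the duality pairing $\dual{u',v_\tau}$ and on checking $(v_\tau,v_\tau^{0,+})\in\mathcal{Y}$ are correct and merely make explicit what the paper leaves implicit.
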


For $v_\tau\in\mathcal{S}_\tau$ and a sequence $\{k_n\}=\{k_n\}_{n=0}^{N-1}$,
we set the following.
\begin{align*}
\nu(v_\tau,\{k_n\})&= 
\sum_{n=0}^{N-1}\int_{J_n} \left[\|v_\tau'\|_{V'}^2+\|v_\tau\|_V^2\right]~dt+\|v_\tau^{0,+}\|^2+\sum_{n=1}^{N-1}k_n\|v_\tau^{n,+}-v_\tau^{n}\|^2,\\
\eta(v_\tau,\{k_n\})&= 
\sum_{n=0}^{N-1}\int_{J_n}
 \|v_\tau\|_V^2~dt+\|v_\tau^{0,+}\|^2+\sum_{n=1}^{N-1}k_n\|v_\tau^{n,+}\|^2,\\
\nu^*(v_\tau,\{k_n\})&= 
\sum_{n=0}^{N-1}\int_{J_n}
 \left[\|v_\tau'\|_{V'}^2+\|v_\tau\|_V^2\right]~dt+\|v_\tau^{N}\|^2
 +\sum_{n=1}^{N-1}k_n\|v_\tau^{n,+}-v_\tau^{n}\|^2,\\
 \eta^*(v_\tau,\{k_n\})&= 
\sum_{n=0}^{N-1}\int_{J_n}
 \|v_\tau\|_V^2~dt+\|v_\tau^{N}\|^2+\sum_{n=1}^{N-1}k_n\|v_\tau^{n,+}\|^2. 
\end{align*}

We use the following norms:  
\begin{subequations} 
\label{eq:11}
\begin{align}
&\normx{v_\tau}^2=\nu(v_\tau;\{1\});\quad 
&&\normxa{v_\tau}^2=\nu(v_\tau;\{\tau_n^{-1}\});\label{eq:11a}\\
&\normy{v_\tau}^2=\eta(v_\tau;\{1\}) ;\quad 
&&\normyb{v_\tau}^2=\eta(v_\tau;\{\tau_n\}); \label{eq:11b}\\
&\normxx{v_\tau}^2=\nu^*(v_\tau;\{1\}); \quad 
&&\normxxa{v_\tau}^2=\nu^*(v_\tau;\{\tau_n^{-1}\}); \label{eq:11c}\\
&\normyy{v_\tau}^2=\eta^*(v_\tau;\{1\}); \quad 
&&\normyyb{v_\tau}^2=\eta^*(v_\tau;\{\tau_n\}). \label{eq:11d}
\end{align}
\end{subequations}

Because we are assuming $\tau_n\le 1$, we have 
$\normx{v_\tau}\le \normxa{v_\tau}$ and  
$\normyb{v_\tau}\le \normy{v_\tau}$ for $v_\tau\in\mathcal{S}_\tau$. 
The same relations hold for other norms defined
  as \eqref{eq:11c} and \eqref{eq:11d}. 
The following lemma directly follows those definitions. 

\begin{lemma}
 \label{la:8}
 For $w_\tau,v_\tau\in \mathcal{S}_\tau$, we have
 \begin{equation*}
  |B_\tau(w_\tau,v_\tau)|
   \le
\begin{cases}
M_0 \normx{w_\tau}\normy{v_\tau}    &\\
M_1\normxa{w_\tau}\normyb{v_\tau}  &\\
M_2\normyy{w_\tau}\normxx{v_\tau}  &
\end{cases}
 \end{equation*}
where $M_j$, $j=0,1,2$, are positive constants depending only on $M$. 
\end{lemma}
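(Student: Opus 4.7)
Each of the three boundedness estimates follows by expanding $B_\tau(w_\tau,v_\tau)$ via one of the two equivalent representations \eqref{eq:2b} or \eqref{eq:2c} and then applying Cauchy--Schwarz term by term; the choice between the two expressions, and the way each Cauchy--Schwarz step is factored, is dictated by which norms appear on the right-hand side.

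For estimates (1) and (2), I would start from \eqref{eq:2b}. On each slab $J_n$, the integrand is bounded by $|\dual{w_\tau',v_\tau}|+|\dual{A(t)w_\tau,v_\tau}|\le \|w_\tau'\|_{V'}\|v_\tau\|_V+M\|w_\tau\|_V\|v_\tau\|_V$ thanks to \eqref{eq:0a}; integrating, applying Cauchy--Schwarz in time on $J_n$, and then Cauchy--Schwarz over $n$ yields a contribution bounded by $(1+M)(\sum\int_{J_n}[\|w_\tau'\|_{V'}^2+\|w_\tau\|_V^2])^{1/2}(\sum\int_{J_n}\|v_\tau\|_V^2)^{1/2}$, which fits into both $\normx{w_\tau}\normy{v_\tau}$ and $\normxa{w_\tau}\normyb{v_\tau}$. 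The initial term $(w_\tau^{0,+},v_\tau^{0,+})$ is handled by Cauchy--Schwarz in $H$ and likewise lands in both norm pairs. The only real difference between (1) and (2) is the jump sum in \eqref{eq:2b}: for (1), weight-$1$ Cauchy--Schwarz gives $\sum(w_\tau^{n,+}-w_\tau^n,v_\tau^{n,+})\le (\sum\|w_\tau^{n,+}-w_\tau^n\|^2)^{1/2}(\sum\|v_\tau^{n,+}\|^2)^{1/2}$, matching $\normx{w_\tau}\normy{v_\tau}$; for (2), one first rescales as $\tau_n^{-1/2}\|w_\tau^{n,+}-w_\tau^n\|\cdot\tau_n^{1/2}\|v_\tau^{n,+}\|$ before applying Cauchy--Schwarz, producing the $\normxa{w_\tau}\normyb{v_\tau}$ pairing.

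For estimate (3), I would switch to \eqref{eq:2c}, whose volume integrand $|-(w_\tau,v_\tau')+\dual{A(t)w_\tau,v_\tau}|\le \|w_\tau\|_V(\|v_\tau'\|_{V'}+M\|v_\tau\|_V)$ is exactly of the form needed to land in $\normyy{w_\tau}\normxx{v_\tau}$ after Cauchy--Schwarz in time and in $n$; the endpoint term $(w_\tau^N,v_\tau^N)$ is handled immediately since $\|w_\tau^N\|\le\normyy{w_\tau}$ and $\|v_\tau^N\|\le\normxx{v_\tau}$. The delicate point, and the main obstacle I anticipate, is the jump sum $\sum(w_\tau^n,v_\tau^n-v_\tau^{n,+})$: the left-trace value $\|w_\tau^n\|$ (the value of $w_\tau$ at $t_n$ viewed from $J_{n-1}$) does not occur explicitly in $\normyy{w_\tau}$. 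My plan is to split $w_\tau^n=w_\tau^{n,+}-(w_\tau^{n,+}-w_\tau^n)$, so that the first piece pairs $\|w_\tau^{n,+}\|$ (which is controlled by $\normyy{w_\tau}$ for $n\ge 1$) with $\|v_\tau^n-v_\tau^{n,+}\|$ (controlled by $\normxx{v_\tau}$) via Cauchy--Schwarz, while the remaining piece, involving the jump $\|w_\tau^{n,+}-w_\tau^n\|$ which is not an ingredient of $\normyy{w_\tau}$, must be absorbed by a polynomial inverse estimate on $J_{n-1}$ bounding this quantity by the $L^2(J_{n-1};V)$ part of $\normyy{w_\tau}$. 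This last estimate is routine but forces the constant $M_2$ to depend on $q$ in addition to $M$, and is the only step that is not pure Cauchy--Schwarz.
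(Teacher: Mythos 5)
Your handling of the first two bounds is correct and is all the paper itself offers (it states that the lemma ``directly follows those definitions''): expand \eqref{eq:2b}, use \eqref{eq:0a} and Cauchy--Schwarz on the volume terms, bound $(w_\tau^{0,+},v_\tau^{0,+})$ directly, and treat the jump sum with the unweighted pairing for $M_0$ and with the $\tau_n^{-1/2}\cdot\tau_n^{1/2}$ rescaling for $M_1$.

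The third bound is where your plan fails, and the failure is not repairable in the form you propose. You correctly isolate the obstruction --- the left traces $w_\tau^{n}$ appearing in the jump sum of \eqref{eq:2c} are not recorded by $\normyy{w_\tau}$ --- but the inverse estimate you invoke gives only $\|w_\tau^{n}\|_H\le C_q\,\tau_{n-1}^{-1/2}\|w_\tau\|_{L^2(J_{n-1};H)}\le C\,C_q\,\tau_{n-1}^{-1/2}\|w_\tau\|_{L^2(J_{n-1};V)}$, and the factor $\tau_{n-1}^{-1/2}$ cannot be absorbed into the $L^2(J_{n-1};V)$ part of $\normyy{w_\tau}$; the embedding $V\hookrightarrow H$ goes the right way but gains no power of $\tau_{n-1}$, so this step blows up under mesh refinement. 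In fact no argument can close the estimate with the norms exactly as written in \eqref{eq:11d}: take $q=0$, $N=2$, $w_\tau=a$ on $J_0$ and $0$ on $J_1$, $v_\tau=0$ on $J_0$ and $b$ on $J_1$, with $a=b\in V$ and $\|b\|_H=1$. Then $B_\tau(w_\tau,v_\tau)=-(a,b)_H=-1$, while $\normyy{w_\tau}^2=\tau_0\|a\|_V^2\to 0$ as $\tau_0\to 0$ with $b$ fixed and $\normxx{v_\tau}$ stays bounded. The resolution is that the nodal sum in $\eta^*$ should carry the left traces $\|v_\tau^{n}\|^2$ rather than $\|v_\tau^{n,+}\|^2$ (the natural dual of $\eta$, matching the jump terms of \eqref{eq:2c}); with that reading, your third bound follows by pure Cauchy--Schwarz exactly as the first, with $M_2$ depending only on $M$ and no inverse estimate and no $q$-dependence at all. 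Note also that in the paper's applications of this bound (Theorem \ref{th:1} combined with the choice $w_\tau|_{J_n}=I_nu$ in Theorem \ref{th:2}) the first argument vanishes at every nodal left trace, so the offending term never actually contributes there. As submitted, however, your final absorption step is wrong, and you should not present the $q$-dependent inverse estimate as a routine fix.
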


We are now in a position to state the main results presented in this paper. 
  
  \begin{theorem}
There exist positive constants $c_1$ and $c_2$ depending only on
  $\alpha$, $M$ and $q$ such that
   \begin{subequations} 
    \label{eq:16z}
     \begin{align}
      \inf_{w_\tau\in\mathcal{S}_\tau}
       \sup_{v_\tau\in\mathcal{S}_\tau}\frac{B_\tau(w_\tau,v_\tau)}{\normx{w_\tau}\normyb{v_\tau}}= c_1; 
 \label{eq:16}\\
      \inf_{w_\tau\in\mathcal{S}_\tau}
       \sup_{v_\tau\in\mathcal{S}_\tau}\frac{B_\tau(w_\tau,v_\tau)}{\normyyb{w_\tau}\normxx{v_\tau}}= c_2.
  \label{eq:23z}
     \end{align} 
   \end{subequations} 
  \label{th:0}
  \end{theorem}

The proof of Theorem \ref{th:0} will be given in Section \ref{sec:5}. 
The following result, Theorem \ref{th:1}, is a readily
obtainable corollary of Theorem \ref{th:0} and Lemma \ref{la:8}.

 \begin{theorem}
\label{th:1}
Letting $u\in\mathcal{X}$ and $u_\tau\in\mathcal{S}_\tau$ respectively denote the solutions of \eqref{eq:1} and \eqref{eq:2}, 
we have  
\begin{subequations} %
\label{eq:100}
\begin{align}
\normx{u-u_\tau}&\le \left(1+\frac{M_1}{c_1}\right)\inf_{w_\tau\in\mathcal{S}_\tau}\normxa{u-w_\tau},\label{eq:100a}  \\   
\normyyb{u-u_\tau}&\le \left(1+\frac{M_2}{c_2}\right)\inf_{w_\tau\in\mathcal{S}_\tau}\normyy{u-w_\tau},\label{eq:100b}   
\end{align}
 \end{subequations}
 \end{theorem}

 \begin{proof}
  Let $w_\tau\in\mathcal{S}_\tau$ be arbitrary. In view of Theorem
  \ref{th:0}, Lemmas
  \ref{la:2} and \ref{la:8}, we have
  \begin{align*}
\|w_\tau-u_\tau\|_{\mathcal{X},\tau}
    & \le \frac{1}{c_1}\sup_{v_\tau\in
    \mathcal{S}_\tau}\frac{B_\tau(w_\tau-u_\tau,v_\tau)}{\|v_\tau\|_{\mathcal{Y},\tau,\#}} 
    =\frac{1}{c_1}\sup_{v_\tau\in
    \mathcal{S}_\tau}\frac{B_\tau(w_\tau-u,v_\tau)}{\|v_\tau\|_{\mathcal{Y},\tau,\#}} \\
    & \le \frac{M_1}{c_1}\normxa{w_\tau-u}. 
   \end{align*}
Therefore, using the triangle inequality, we obtain \eqref{eq:100a}. The
  proof of \eqref{eq:100b} is the same.   
 \end{proof}
 
We derive some optimal order error estimates using Theorem
\ref{th:1}. 
We set
$t_{n,j}=t_{n}+j(t_{n+1}-t_n)/q=t_{n}+j\tau_{n}/q$
$(j=0,\ldots,q)$ \ns{if $q\ge 1$, and $t_{n,0}=t_n$ if $q=0$}. For $v\in L^2(J_n;V)\cap H^1(J_n;V')$, there exists a
unique $I_{n}v\in \mathcal{P}^q(\overline{J_n};V)$ such that
\[
 (I_{n}v)(t_{n,j})=v(t_{n,j})\qquad (j=0,\ldots,q).
\]
The following error 
estimates for the
Lagrange interpolation $I_nv$ is proved in the standard way using
Taylor's theorem (see Theorem 4.A of \cite{zei86} for example). We write
 $v^{(s)}=(d/dt)^{s}v$ for positive integer $s$.
 
\begin{lemma}
 \label{la:20}
Letting $q\ge 1$ be an integer, then there exists an absolute positive constant
 $C$ such that 
 \begin{subequations} %
  \label{eq:107}
\begin{align}
\|v-I_n v\|_{L^2(J_n;U)} & \le C\tau_n^{s}\|v^{(s)}\|_{L^2(J_n;U)},\label{eq:107a}\\
\|v'-(I_n v)'\|_{L^2(J_n;U)} & \le C\tau_n^{s-1}\|v^{(s)}\|_{L^2(J_n;U)}\label{eq:107b}
\end{align}
 \end{subequations} %
for an integer $s$ with $1<s\le q+1$ and $v\in H^{q+1}(J_n;U)$, where $U=V,H,V'$. Constant $C$ is independent of $U$. 
Moreover, the estimate \eqref{eq:107a} remains true for $q=0$ and $s=1$. 
\end{lemma}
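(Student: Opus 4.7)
The plan is to reduce \eqref{eq:107} on $J_n$ to a fixed-geometry estimate on the reference interval $\hat J = (0,1)$ by the affine change of variable $\hat t = (t - t_n)/\tau_n$, and then to handle the reference estimate via Taylor's theorem with integral remainder. With $\hat v(\hat t) := v(t_n + \tau_n \hat t)$, a direct computation yields the scaling identities $\|v\|_{L^2(J_n;U)} = \tau_n^{1/2}\|\hat v\|_{L^2(\hat J;U)}$ and $\|v^{(s)}\|_{L^2(J_n;U)} = \tau_n^{1/2 - s}\|\hat v^{(s)}\|_{L^2(\hat J;U)}$; moreover $I_n$ commutes with the pullback in the sense that $\widehat{I_n v} = \hat I \hat v$, where $\hat I$ is Lagrange interpolation at the fixed reference nodes $\hat t_j = j/q$.

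On $\hat J$ I would Taylor-expand $\hat v$ about $\hat t = 0$ to order $s$, writing $\hat v = \hat p + \hat r$ with $\hat p \in \mathcal P^{s-1}(\hat J;U)$ and remainder $\hat r(\hat t) = \frac{1}{(s-1)!}\int_0^{\hat t} (\hat t - r)^{s-1} \hat v^{(s)}(r)\,dr$; the Sobolev embedding $H^s(\hat J;U)\hookrightarrow C^{s-1}(\hat J;U)$ in one dimension makes the pointwise values required for the expansion well-defined. Since $s - 1 \le q$, $\hat I$ reproduces $\hat p$, so $\hat v - \hat I \hat v = \hat r - \hat I \hat r$. Applying the triangle inequality in $U$ and Cauchy–Schwarz in $r$ to the integral remainder gives $\|\hat r\|_{L^\infty(\hat J;U)} \le C \|\hat v^{(s)}\|_{L^2(\hat J;U)}$, and similarly for $\|\hat r'\|_{L^\infty(\hat J;U)}$ after differentiating the remainder formula. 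Combined with the boundedness of $\hat I$ from $C^0(\hat J;U)$ into $W^{1,\infty}(\hat J;U)$ (whose constant is a finite sum of $W^{1,\infty}$-norms of the $q+1$ scalar Lagrange basis polynomials), this yields $\|\hat v - \hat I \hat v\|_{L^2(\hat J;U)}$ and $\|(\hat v - \hat I \hat v)'\|_{L^2(\hat J;U)}$ each bounded by a constant multiple of $\|\hat v^{(s)}\|_{L^2(\hat J;U)}$. Undoing the scaling then produces $\tau_n^s$ in \eqref{eq:107a}; for \eqref{eq:107b} differentiating through the pullback costs an additional $\tau_n^{-1}$, giving the exponent $s-1$ as claimed.

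The $U$-independence of $C$ is essentially automatic: every step uses only the triangle and Cauchy–Schwarz inequalities in the $U$-norm together with finite sums of \emph{scalar} Lagrange basis polynomials, none of which sees the specific Hilbert structure of $V$, $H$, or $V'$. For the edge case $q = 0$, $s = 1$ in \eqref{eq:107a} the above reference construction degenerates (there is only one interpolation node and the Taylor remainder of order $1$ must be treated directly), but $I_n v \equiv v(t_n)$ in that case, so the bound follows immediately from $v(t) - v(t_n) = \int_{t_n}^t v'(r)\,dr$ together with a single Cauchy–Schwarz. I do not anticipate any structural obstacle; the only genuine care required is clean bookkeeping of the $\tau_n$-powers under scaling and verifying that no step implicitly appeals to more than the abstract Hilbert $U$-norm.
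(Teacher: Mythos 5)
Your argument is correct and is precisely the ``standard way using Taylor's theorem'' that the paper invokes (with a citation to Zeidler) without writing out the details: affine scaling to the reference interval, Taylor expansion with integral remainder, polynomial reproduction by the degree-$q$ Lagrange interpolant, and stability of the interpolant via the scalar basis polynomials, which also delivers the $U$-independence of the constant exactly as you note. Your handling of the $q=0$, $s=1$ case via $I_nv\equiv v(t_n)$ matches the convention the paper actually uses in the proof of Theorem \ref{th:2}, so there is nothing to add.
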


Finally we can state the following optimal order error estimates. 

\begin{theorem}
 \label{th:2}
Letting $u\in\mathcal{X}$ and $u_\tau\in\mathcal{S}_\tau$ respectively denote the
 solutions of \eqref{eq:1} and \eqref{eq:2}, then if $q\ge 0$ and  
 $u^{(q)}\in\mathcal{X}=L^2(J;V)\cap H^1(J;V')$, it follows that 
\begin{subequations} %
\label{eq:101}
\begin{equation}
 \left(\sum_{n=0}^{N-1}\|u'-u_\tau'\|_{L^2(J_n;V')}^{2}\right)^{1/2} 
\ns{+\|u-u_\tau\|_{L^2(J;V)}}\le c_3
  \tau^{q}
  \|u^{(q)}\|_{\mathcal{X}}.\label{eq:101a}   
\end{equation}
Moreover, if $q\ge 1$ and $u^{(q+1)}\in \mathcal{Y}_1=L^2(J;V)$, then \ns{ 
 \begin{equation}
  \sup_{1\le n\le N}\|u(t_n)-u_\tau(t_n)\|+\|u-u_\tau\|_{L^2(J;V)}
  \le  c_4\tau^{q+1}\|u^{(q+1)}\|_{\mathcal{Y}_1}.
\label{eq:101b}
\end{equation}
}
\end{subequations}
If $q=0$ and $u'\in \mathcal{X}$, the estimate \eqref{eq:101b} holds with replacement
 $\|u^{(q+1)}\|_{\mathcal{Y}_1}$ by
 $\|u'\|_{\mathcal{X}}$. 
Therein, $c_3$ and $c_4$ denote positive constants depending only on $\alpha$,
 $M$ and $q$. 
\end{theorem}

 \begin{proof}
First, we prove \eqref{eq:101b} with $q\ge 1$.  
Let us define $w_\tau\in\mathcal{S}_\tau$ by $w_\tau|_{J_n}=I_nu$ for $n=0,1,\ldots,N-1$. Set $c_2'=1+M_2/c_2$.    
Because $u^{n,+}-w_\tau^{n,+}=0$ for $n=1,\ldots,N-1$ and $u^{N}-w_\tau^{N}=0$, we have by \eqref{eq:100b} and \eqref{eq:107a}
\begin{multline}
 \|u-u_\tau\|_{L^2(J;V)}^2+ \|u(t_N)-u_\tau(t_N)\|^2
+\sum_{n=1}^{N-1}\tau_n\|u_\tau(t_n)-u_\tau^{n,+}\|^2\\
 \le
C^2(c_2')^2\sum_{n=0}^{N-1}\tau_n^{2(q+1)}\|u^{(q+1)}\|_{L^2(J_n;V)}^2,
\label{eq:201}
\end{multline}
which implies \eqref{eq:101b}  with $q\ge 1$. We next consider the case $q=0$. 
Then, \ns{$I_nv$ is a constant function in $J_n$ such that $(I_nv)(t)=v(t_n)$ for $t\in J_n$. }
We still have $u^{n,+}-w_\tau^{n,+}=0$ for $n=1,\ldots,N-1$;
 however, we have $u^{N}-w_\tau^{N}=u(t_N)-u(t_{N-1})\ne 0$. 
Consequently, the left-hand side of \eqref{eq:201} is estimated from above by 
 \[
 C^2(c_2')^2\sum_{n=0}^{N-1}\tau^2_n\|u'\|_{L^2(J_n;V)}^2+\tau_{N-1}^2\sup_{t\in J}\|u'\|^2.
 \]
Therefore, in view of the trace inequality \eqref{eq:ch}, we obtain \eqref{eq:101b}  with replacement
 $\|u^{(q+1)}\|_{\mathcal{Y}_1}$ by $\|u'\|_{\mathcal{X}}$.  
The inequality \eqref{eq:101a} with $q\ge 1$ is proved similarly using \eqref{eq:100a} and
   \eqref{eq:107}. Finally, 
the estimate \eqref{eq:101a} with $q =0$ implies a trivial inequality, because $u_\tau'=0$ on every $J_n$. 
 \end{proof}

\begin{remark}
If $q=0$, the estimate \eqref{eq:101a} gives no information on the convergence. However, the following observation may be useful. 
For \ns{a given $w_\tau\in\mathcal{S}_\tau$}, there exists a unique $\ns{D_\tau w_\tau}\in\mathcal{S}_\tau$ such that 
\[
 \int_{J_n}(\ns{D_\tau w_\tau},v_\tau)~dt=
 \int_{J_n}(\ns{w_\tau}',v_\tau)~dt +\left(\ns{w_\tau^{n,+}-w_\tau^n},v_\tau^{n,+}\right)
\qquad (v_\tau\in \mathcal{S}_\tau,~n=0,\ldots,N-1).
\]  
\ns{In view of Lemma \ref{la:2}, we see that the solution $u_\tau\in\mathcal{S}_\tau$ of \eqref{eq:2} solves }
\[
 \sum_{n=0}^{N-1}\int_{J_n}\dual{u'-\ns{D_\tau}u_\tau,v_\tau}~dt=
\int_{J}\dual{A(t)(u-u_\tau),v_\tau}~dt
 \qquad
 (\forall v_\tau\in\mathcal{S}_\tau),
\]
where $u_\tau^0=u_0$. 
Therefore, applying \eqref{eq:0a} and \eqref{eq:101b}, we deduce 
 \begin{equation}
\mathcal{E}(u'-\ns{D_\tau}u_\tau)
	\le  \begin{cases}
	      \ns{c_4}M \tau^{q+1}\|u^{(q+1)}\|_{\mathcal{Y}_1} & (q\ge 1)\\
	      \ns{c_4}M \tau\|u'\|_{\mathcal{X}} & (q= 0),
	     \end{cases}
       \label{eq:101z}
\end{equation}
where  
 \[
 \mathcal{E}(w)=\sup_{v_\tau\in\mathcal{S}_\tau}\frac{1}{~\|v_\tau\|_{L^2(J;V)}}
  \sum_{n=0}^{N-1}\int_{J_n}|\dual{w,v_\tau}|~dt\quad (w\in C^0(\Delta_\tau;H)).
 \]
The estimate \eqref{eq:101z} shows that $\ns{D_\tau} u_\tau$ provides an approximation of order $\tau^{q+1}$ for $u'$ with any $q\ge 0$. 
\ns{As a matter of fact, $D_\tau u_\tau$ is closely related with the parabolic reconstruction $\hat{u}_\tau\in\mathcal{S}_\tau^{q+1}(H,V)$ of $u_\tau$ introduced by \cite{mn06}. 
More specifically, $\hat{u}_\tau$ is given as 
\[
  \hat{u}_\tau(t)=\int_{t_n}^t (D_\tau u_\tau) (s)~ds+u_\tau^n\qquad (t\in J_n,n=0,\ldots,N-1).
\]
\cite{mn06} successfully used the parabolic reconstruction operator to deduce a posteriori error estimates for the $\textup{dG}(q)$ method applied to linear and nonlinear problems.  
}
\end{remark}

\ns{
\begin{remark}
\label{rem:00}
Reviewing \eqref{eq:201} in the proof of Theorem \ref{th:2}, we have
\[
 \left(\sum_{n=1}^{N-1}\tau_n\|u_\tau(t_n)-u_\tau^{n,+}\|^2\right)^{1/2}
  \le  c_4\tau^{q+1}\|u^{(q+1)}\|_{\mathcal{Y}_1}.
\] 
This is not an error estimate and is an estimation for the jump of $u_\tau$ at nodes. 
\end{remark}
}

\section{Proof of Theorem \ref{th:0}}
\label{sec:5}

This section is devoted to the proof of Theorem \ref{th:0}.
First, we collect some preliminary results. Throughout this section, we let
$n=0,1,\ldots,N-1$, unless otherwise stated explicitly. 

We let $q\ge 1$ for the time being.  
The following projection is a slight modification of
\cite[(12.9)]{tho06}. 
For $v\in L^2(J_n;V)$, there exists a unique
$\tilde{v}\in \mathcal{P}^q(J_n;V)$ such that 
 \begin{subequations} 
  \label{eq:71}
\begin{gather}
 \tilde{v}^{n,+}=0;\label{eq:71a}\\
 \int_{J_n} [v(t)-\tilde{v}(t)]~(t-t_n)^l~dt=0\quad (l=0,1,\ldots,q-1).
\label{eq:71b}
\end{gather}
 \end{subequations}
Projection $\pi_n:L^2(J_n;V)\to\mathcal{P}^q(J_n;V)$ is defined as
$\tilde{v}=\pi_nv$.    
In fact, $\tilde{v}$ is expressed as
${\tilde{v}=\sum_{l=1}^{q}a_l(t-t_n)^l}$ with $a_1,\ldots,a_q\in V$ in view of \eqref{eq:71a}.
Therefore, \eqref{eq:71b} implies the system of $V$-valued linear
equations for unknowns $a_1,\ldots,a_q$. The number of equations is also
$q$: it suffices to check the uniqueness to verify the
existence of $\tilde{v}$. However, it is a direct consequence of the
following \eqref{eq:9b}. Alternatively, one could follow the same argument as
\cite{tho06} to deduce the uniqueness. Therefore, the projection $\pi_n$ is
well-defined.

\begin{lemma}
 \label{la:6}
Letting $q\ge 1$ and $U=H,V$, the projection $\pi_n$ satisfies the following: 
\begin{subequations} 
\begin{align}
  \int_{J_n}(\pi_n{v},\chi)_U~dt&=\int_{J_n}(v,\chi)_U~dt&&( \chi\in
  \mathcal{P}^{q-1}(J_n; V)); 
 \label{eq:9a}\\
\|\pi_nv\|_{L^2(J_n;U)}&\le
 C_q\|v\|_{L^2(J_n;U)}&& (v\in L^2(J_n;V)),
 \label{eq:9b}
\end{align}
\end{subequations}
where $C_q$ denotes a positive constant depending only on $q$.
\end{lemma}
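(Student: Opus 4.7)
The plan is to derive \eqref{eq:9a} directly from the defining orthogonality \eqref{eq:71b} by reducing it to scalar identities, and then to establish the stability bound \eqref{eq:9b} via a scaling argument that reduces matters to the reference interval $(0,1)$, where the projection is governed by a principal submatrix of the classical Hilbert matrix.

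For \eqref{eq:9a}, I would fix $\chi\in\mathcal{P}^{q-1}(J_n;V)$ and expand it in the monomial basis as $\chi(t)=\sum_{l=0}^{q-1}(t-t_n)^l\chi_l$ with coefficients $\chi_l\in V$. For each fixed $\chi_l\in V$, the map $w\mapsto(w,\chi_l)_U$ is a continuous linear functional on $V$ (both for $U=V$ and for $U=H$, using the continuous injection $V\hookrightarrow H$). Applying this functional to the $V$-valued Bochner identity \eqref{eq:71b} produces the scalar identity $\int_{J_n}(v(t)-\pi_n v(t),\chi_l)_U(t-t_n)^l\,dt=0$ for each $l$; summation over $l$ yields \eqref{eq:9a}.

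For \eqref{eq:9b}, I would reduce to the reference interval $\hat J=(0,1)$ by the affine change of variables $s=(t-t_n)/\tau_n$. Setting $\hat v(s)=v(t_n+\tau_n s)$ and transforming \eqref{eq:71} via the same substitution, the rescaled polynomial $\widehat{\pi_n v}$ is precisely the analogous reference projection $\hat\pi\hat v$ of $\hat v$, while $\|\cdot\|_{L^2(J_n;U)}^2=\tau_n\|\cdot\|_{L^2(\hat J;U)}^2$ on both sides. So it suffices to prove $\|\hat\pi\hat v\|_{L^2(\hat J;U)}\le C_q\|\hat v\|_{L^2(\hat J;U)}$ on $\hat J$. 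Writing $\hat\pi\hat v(s)=\sum_{l=1}^{q}a_l s^l$ with $a_l\in V$ (the constraint $\hat\pi\hat v(0)=0$ kills the constant term), the orthogonality conditions become the $V$-valued linear system
\[
\sum_{l=1}^{q}\frac{a_l}{l+k+1}=\int_0^1\hat v(s)\,s^k\,ds\qquad(k=0,\ldots,q-1).
\]
The coefficient matrix $(1/(l+k+1))$ is a principal submatrix of the Hilbert matrix, hence symmetric positive definite and invertible, with scalar inverse entries depending only on $q$. This simultaneously supplies the pending uniqueness claim for $\pi_n v$ and expresses each $a_l$ as a scalar linear combination of the $V$-valued moments $\int_0^1\hat v(s)s^k\,ds$. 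Cauchy--Schwarz then gives $\|\int_0^1\hat v(s)s^k\,ds\|_U\le\|\hat v\|_{L^2(\hat J;U)}$, hence $\|a_l\|_U\le C_q'\|\hat v\|_{L^2(\hat J;U)}$, and consequently $\|\hat\pi\hat v\|_{L^2(\hat J;U)}\le\sum_{l=1}^{q}\|a_l\|_U\|s^l\|_{L^2(0,1)}\le C_q\|\hat v\|_{L^2(\hat J;U)}$. Undoing the scaling yields \eqref{eq:9b} on $J_n$ with a constant depending only on $q$.

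The mildly subtle point is keeping the argument uniform in the choice $U\in\{V,H\}$: the Hilbert-matrix inversion is a purely algebraic scalar step that acts on the monomial basis $\{s^l\}$ alone, so the vector-valued unknowns $a_l$ inherit bounds in whichever norm is used on the right-hand-side moments, and the constant $C_q$ therefore depends only on $q$, not on the geometry of $V$ or $H$.
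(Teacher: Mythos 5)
Your proof of \eqref{eq:9a} coincides with the paper's: both expand $\chi$ in the monomial basis and apply the continuous functional $(\cdot\,,b_l)_U$ to the Bochner-integral identity \eqref{eq:71b}. For \eqref{eq:9b} you take a genuinely different, and correct, route. The paper works directly on $J_n$: it substitutes $\chi=\tilde v'$ into \eqref{eq:9a}, invokes the inverse inequality $\|w'\|_{L^2(J_n;U)}\le (c_q/2\tau_n)\|w\|_{L^2(J_n;U)}$ to control the endpoint value $\|\tilde v(t_{n+1})\|_U$, and then uses the discrete characteristic function of Chrysafinos--Walkington to propagate this to the pointwise bound $\|\tilde v(s)\|_U^2\le (C_q/\tau_n)\|v\|_{L^2(J_n;U)}\|\tilde v\|_{L^2(J_n;U)}$ for every $s\in J_n$, finally integrating in $s$. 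You instead rescale to the reference interval and invert the moment system explicitly. One cosmetic correction: after reindexing, your coefficient matrix is $\left(1/(i+j)\right)_{i,j=1}^{q}$, which is not literally a principal submatrix of the Hilbert matrix $\left(1/(i+j-1)\right)$; it is, however, the Gram matrix of $s^{1},\dots,s^{q}$ in $L^2\bigl((0,1),ds/s\bigr)$ (equivalently a symmetric Cauchy matrix), hence symmetric positive definite with inverse depending only on $q$, so your conclusion stands. Your argument is more elementary and self-contained, it delivers the existence and uniqueness of $\pi_n$ as a byproduct (which the paper obtains only as a consequence of \eqref{eq:9b}), and it makes the independence of $C_q$ from the choice $U\in\{H,V\}$ transparent, since the inversion is a purely scalar operation on the monomial basis. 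What the paper's argument buys is the stronger intermediate $L^\infty$-in-time control of $\tilde v$ on $J_n$ and a technique that survives in situations where an explicit basis solve is unavailable; for the lemma as stated, both proofs are complete.
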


\begin{proof}[Proof of \eqref{eq:9a}]
Let $v\in L^2(J_n;V)$ and $\chi\in
  \mathcal{P}^{q-1}(J_n;V)$. Writing 
 ${\chi=\sum_{l=0}^{q-1}b_l(t-t_n)^l}$ with $b_0,\ldots,b_{q-1}\in V$ and using \eqref{eq:71b}, we have 
\[
  \int_{J_n}\left(v-\pi_nv,\chi\right)_U~dt
 =\sum_{l=0}^{q-1}\left(\int_{J_n}(v-\pi_nv)(t-t_n)^l~dt,\ b_l\right)_U=0.
  \]
In fact, the first equality is justified because $v:J_n\to U$ is Bochner
 integrable; see \S V.5 of \cite{yos80} for example.
\end{proof}

\begin{proof}[Proof of \eqref{eq:9b}]
Let $v\in L^2(J_n;V)$ and set $\tilde{v}=\pi_nv\in
 \mathcal{P}^q(J_n;V)$. The proof is divided into two steps.  

\noindent \emph{Step 1.} Substituting $\chi=\tilde{v}'\in\mathcal{P}^{q-1}(J_n;V)$ into \eqref{eq:9a},
 we have
  \begin{equation}
  \label{eq:9i}
  \int_{J_n}(\tilde{v},\tilde{v}')_U~dt
 =\int_{J_n}(v,\tilde{v}')_U~dt. 
 \end{equation}
To estimate the right-hand side, we apply the inverse
 inequality
\[
  \|w'\|_{L^2(J_n;U)}\le \frac{c_q}{2\tau_n}\|w\|_{L^2(J_n;U)}\quad (w\in\mathcal{P}^q(J_n;U)),
\]
where $c_q>0$ denotes a constant depending only on $q$. The
 proof of this inequality is exactly the same as the scalar case. In
 particular, $c_q$ is independent of $U$. Using this, we have
\begin{equation}
  \label{eq:9i1}
\left| \int_{J_n}(v,\tilde{v}')_U~dt\right|
 \le 
\|v\|_{L^2(J_n;U)}\|\tilde{v}'\|_{L^2(J_n;U)}
 \le \frac{c_q}{2\tau_n}\|v\|_{L^2(J_n;U)}\|\tilde{v}\|_{L^2(J_n;U)}.
\end{equation}
On the other hand, by virtue of \eqref{eq:71a}, we have 
\begin{equation}
  \label{eq:9i2}
 \int_{J_n}(\tilde{v},\tilde{v}')_U~dt=\frac{1}{2}\int_{J_n}\frac{d}{dt}\|\tilde{v}(t)\|_U^2~dt
 =\frac12\|\tilde{v}(t_{n+1})\|_U^2.
\end{equation}
 
Combining \eqref{eq:9i}, \eqref{eq:9i1} and \eqref{eq:9i2}, we obtain
\[
  \|\tilde{v}(t_{n+1})\|_U^2
  \le \frac{c_q}{\tau_n}\|v\|_{L^2(J_n;U)}\|\tilde{v}\|_{L^2(J_n;U)}.
\]

\noindent \emph{Step 2. }For $\tilde{v}$ defined above and $s\in J_n$,
 there exists a $\hat{v}\in \mathcal{P}^{q}(J_n;V)$ such that 
\begin{subequations} 
\label{eq:9f}
\begin{gather}
  \hat{v}(t_{n+1})=\tilde{v}(t_{n+1});\label{eq:9f0} \\
  \int_{J_n}(\hat{v},\chi)_U~dt=\int_{t_n}^s(\tilde{v},\chi)_U~dt\quad ( \chi\in
  \mathcal{P}^{q-1}(J_n; V)); 
 \label{eq:9f1}\\
\|\hat{v}\|_{L^2(J_n;U)}\le
 c_q'\|\tilde{v}\|_{L^2(J_n;U)},
 \label{eq:9f2}
\end{gather}
\end{subequations}
where $c'_q>0$ denotes a constant depending only on $q$. Function
 $\hat{v}$ is a modification of the discrete characteristic function of
 $\tilde{v}$ described in (2.8) presented in \cite{cw06}. The proof of
 \eqref{eq:9f2} is exactly the same as that of Lemma 2.4 of \cite{cw06}.   
Using \eqref{eq:71a}, \eqref{eq:9a}, \eqref{eq:9f0}, \eqref{eq:9f1} and
 applying the integration by parts, we can calculate as   
\begin{align*}
   \frac{1}{2}\|\tilde{v}(s)\|_{U}^2
  = \int_0^s(\tilde{v},\tilde{v}')_U~dt &= \int_{J_n}(\hat{v},\tilde{v}')_U~dt \\
  & =
 (\hat{v}(t_{n+1}),\tilde{v}(t_{n+1}))_U-\int_{J_n}(\hat{v}',\tilde{v})_U~dt \\
  & =
 \|\tilde{v}(t_{n+1})\|_U^2-\int_{J_n}(\hat{v}',v)_U~dt.
\end{align*}
 In the same way as the derivation of \eqref{eq:9i1}, we deduce by \eqref{eq:9f2}
\[
 \left|\int_{J_n}(\hat{v}',v)_U~dt\right|
 \le \frac{c_q}{2\tau_n}\|v\|_{L^2(J_n;U)}\|\hat{v}\|_{L^2(J_n;U)}
\le \frac{c_qc_q'}{2\tau_n}\|v\|_{L^2(J_n;U)}\|\tilde{v}\|_{L^2(J_n;U)}.
\]
Summing up those results, we obtain
 \[
  \|\tilde{v}(s)\|_U^2\le \frac{C_q}{\tau_n}\|v\|_{L^2(J_n;U)}\|\tilde{v}\|_{L^2(J_n;U)}
 \]
for $s\in J_n$, where $C_q>0$ denotes a constant depending only on $q$.  
 Integrating the both sides in $s\in J_n$, we deduce the desired
 inequality \eqref{eq:9b}.  
\end{proof}

We consider the trace inequality \eqref{eq:ch} for $T=1$ and write
$C_{\mathrm{Tr}}=C_{\mathrm{Tr},1}$, which is an absolute constant.
The scaling argument gives the following lemma.

\begin{lemma}
 \label{la:9}
 For $v\in L^2(J_n;V)\cap H^1(J_n;V')$, we have 
  \begin{equation}
   \max_{t_{n}\le t\le t_{n+1}}\|v(t)\|\le \frac{C_\mathrm{Tr}}{~\tau_n^{1/2}}\left(
\|v\|_{L^2(J_n;V)}^2+\tau_n^2\|v'\|_{L^2(J_n;V')}^2
					      \right)^{1/2}.
  \label{eq:13}
  \end{equation}
\end{lemma}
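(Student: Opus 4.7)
The plan is to reduce to the reference inequality \eqref{eq:ch} on the unit interval $J = (0,1)$ by a linear change of variables, as suggested by the statement that the constant $C_{\mathrm{Tr}} = C_{\mathrm{Tr},1}$ is absolute. Concretely, for $v \in L^2(J_n;V) \cap H^1(J_n;V')$, I would introduce the rescaled function $\hat{v}(s) = v(t_n + \tau_n s)$ for $s \in [0,1]$, so that $\hat{v}'(s) = \tau_n v'(t_n + \tau_n s)$ in the sense of $V'$-valued distributions. It is immediate that $\hat{v} \in L^2(0,1;V) \cap H^1(0,1;V') = \mathcal{X}|_{T=1}$ and that $\max_{s \in [0,1]} \|\hat{v}(s)\| = \max_{t \in [t_n,t_{n+1}]} \|v(t)\|$.

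The next step is to track the rescaling of norms. By the standard substitution $t = t_n + \tau_n s$, one obtains
\begin{equation*}
\|\hat{v}\|_{L^2(0,1;V)}^2 = \tau_n^{-1} \|v\|_{L^2(J_n;V)}^2, \qquad \|\hat{v}'\|_{L^2(0,1;V')}^2 = \tau_n \|v'\|_{L^2(J_n;V')}^2.
\end{equation*}
Applying \eqref{eq:ch} with $T = 1$ to $\hat{v}$ then yields
\begin{equation*}
\max_{t_n \le t \le t_{n+1}} \|v(t)\| = \max_{0 \le s \le 1} \|\hat{v}(s)\| \le C_{\mathrm{Tr}} \left( \tau_n^{-1} \|v\|_{L^2(J_n;V)}^2 + \tau_n \|v'\|_{L^2(J_n;V')}^2 \right)^{1/2},
\end{equation*}
and factoring out $\tau_n^{-1}$ produces exactly \eqref{eq:13}.

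There is no real obstacle here; the only subtlety worth mentioning is ensuring that the constant appearing on the right-hand side is genuinely independent of $n$ and $\tau_n$, which is precisely why the reference interval is taken to be of unit length (so that $C_{\mathrm{Tr}} = C_{\mathrm{Tr},1}$ is absolute and the full $\tau_n$-dependence is exposed by the scaling of the $L^2$-in-time norms). Everything else is a routine application of the chain rule and the change-of-variables formula for Bochner integrals.
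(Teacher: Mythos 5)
Your proof is correct and is exactly the scaling argument the paper invokes (the paper simply states ``the scaling argument gives the following lemma'' before Lemma \ref{la:9}, relying on \eqref{eq:ch} with $T=1$). The change of variables, the computation of the rescaled norms, and the factoring of $\tau_n^{-1}$ all match what is intended.
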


By virtue of \eqref{eq:00}, $A(t)$ is invertible for a.e. $t\in J$. 
Moreover, we have the following. 

\begin{lemma}
 \label{la:10}
\textup{(i)} $\displaystyle{\dual{g,A(t)^{-1}g}\ge \frac{\alpha}{M^2}\|g\|_{V'}}$ for all
 $g\in V'$ and a.e.~$t\in J$. \\
\textup{(ii)} $\displaystyle{\|A(t)^{-1}g\|_V\le \frac{1}{\alpha}\|g\|_{V'}}$
 for all $g\in V'$ and a.e.~$t\in J$.  
\end{lemma}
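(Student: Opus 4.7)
The plan is to derive both bounds directly from the coercivity and continuity hypotheses \eqref{eq:0a}--\eqref{eq:0b} by testing against $v:=A(t)^{-1}g$. First I would note that for a.e.\ $t\in J$ the operator $A(t):V\to V'$ is indeed invertible: the bilinear form $(w,v)\mapsto\dual{A(t)w,v}$ on $V\times V$ is bounded by $M$ by \eqref{eq:0a} and coercive with constant $\alpha$ by \eqref{eq:0b}, so the Lax--Milgram theorem supplies a unique solution $v\in V$ to $A(t)v=g$ for every $g\in V'$. This already yields that $A(t)^{-1}:V'\to V$ is well defined a.e., and the rest is a two-line computation.

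For part (ii), I set $v=A(t)^{-1}g$ and apply \eqref{eq:0b}:
\[
\alpha\|v\|_V^2\ \le\ \dual{A(t)v,v}\ =\ \dual{g,v}\ \le\ \|g\|_{V'}\|v\|_V,
\]
which after division by $\|v\|_V$ gives $\|A(t)^{-1}g\|_V=\|v\|_V\le \alpha^{-1}\|g\|_{V'}$.

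For part (i) (which I read as $\dual{g,A(t)^{-1}g}\ge (\alpha/M^2)\|g\|_{V'}^2$, the right-hand side being squared for dimensional consistency), I use the same $v$. Coercivity \eqref{eq:0b} gives $\dual{g,A(t)^{-1}g}=\dual{A(t)v,v}\ge\alpha\|v\|_V^2$. Continuity \eqref{eq:0a} gives, for every $w\in V$, $|\dual{g,w}|=|\dual{A(t)v,w}|\le M\|v\|_V\|w\|_V$, so $\|g\|_{V'}\le M\|v\|_V$, i.e.\ $\|v\|_V^2\ge M^{-2}\|g\|_{V'}^2$. Combining the two inequalities yields $\dual{g,A(t)^{-1}g}\ge (\alpha/M^2)\|g\|_{V'}^2$.

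There is no real obstacle; the only thing to be careful about is the a.e.\ dependence on $t$, but since the Lax--Milgram argument is applied pointwise and uses only the $t$-uniform constants $M,\alpha$, both inequalities hold for the same set of full measure on which \eqref{eq:00} is assumed.
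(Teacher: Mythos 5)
Your proof is correct, and it is the standard Lax--Milgram computation that the paper itself omits (the lemma is stated without proof, following only the remark that \eqref{eq:00} makes $A(t)$ invertible a.e.). You also rightly noticed that part (i) as printed is missing a square: the intended inequality is $\dual{g,A(t)^{-1}g}\ge (\alpha/M^2)\|g\|_{V'}^2$, which is exactly the form used later in the proof of Theorem \ref{th:0} (where the term $\frac{\alpha}{M^2}\|w'\|_{V'}^2$ appears), and your derivation establishes precisely that.
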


Now, we can state the following proof. 

\begin{proof}[Proof of Theorem \ref{th:0}, \eqref{eq:16}] 
Let $w_\tau\in \mathcal{S}_\tau$. 
First, we consider the case $q \ge 1$. Setting $\phi=A(t)^{-1}w_\tau'\in L^2(J;V)$, we define $\tilde{\phi}_\tau\in\mathcal{S}_\tau$ as
 $\tilde{\phi}_\tau|_{J_n}=\pi_n(\phi|_{J_n})$ for
 $n=0,\ldots,N-1$.

For abbreviation, we write $w=w_\tau$ and
 $\tilde{\phi}=\tilde{\phi}_\tau$. According to \eqref{eq:71a}, Lemmas \ref{la:6} and \ref{la:10}, we know
 \begin{subequations} 
  \label{eq:17}
  \begin{gather}
  \tilde{\phi}^{n,+}=0;\label{eq:17a}\\
 \int_{J_n}(\chi,\tilde{\phi})~dt=\int_{J_n}(\chi,\phi)~dt
  \quad (\forall \chi\in
  \mathcal{P}^{q-1}(J_n; V));\label{eq:17b}\\
  \|\tilde{\phi}\|_{L^2(J_n;V)}\le C_q\|A^{-1}w'\|_{L^2(J_n;V)}\le
  \frac{C_q}{\alpha}\|w'\|_{L^2(J_n;V')}. \label{eq:17c}
    \end{gather}
 \end{subequations} 

Now set $v=\tilde{\phi}+\mu w\in\mathcal{S}_\tau$ with $\mu>0$; the value of $\mu$ will be
 specified later.

 We prove
 \begin{equation}
  \label{eq:18}
   \normyb{v}\le C_1\normx{w},
 \end{equation}
 where $C_1$ is a positive constant depending only on $\mu,\alpha$ and $q$. Using \eqref{eq:17a}, \eqref{eq:17c} and Lemma \ref{la:9}, we can calculate
 \begin{align*}
  \normyb{v}^2
&\le \sum_{n=0}^{N-1}\int_{J_n}2\left[\|\tilde{\phi}\|_{V}^2+\mu^2\|w\|_V^2\right]~dt+\mu^2\sum_{n=1}^{N-1}\tau_n\|w^{n,+}\|^2+\mu^2\|w^{0,+}\|^2\\
&\le \sum_{n=0}^{N-1}\int_{J_n}\left[\left(2\frac{C_q^2}{\alpha^2}+\mu^2C_\mathrm{Tr}^2\tau_n^2\right)\|w'\|_{V'}^2+\mu^2\left(2+C_\mathrm{Tr}^2\right)\|w\|_V^2\right]~dt+\mu^2\|w^{0,+}\|^2,
 \end{align*}
which implies \eqref{eq:18}.

We apply \eqref{eq:17a}, \eqref{eq:17b}, \eqref{eq:17c}, Lemma \ref{la:10} and
 Young's inequality to obtain 
 \begin{align*}
  B_\tau(w,\tilde{\phi})
  &=  \sum_{n=0}^{N-1}\int_{J_n}\left[(w',A(t)^{-1}w')+
  \dual{A(t)w,\tilde{\phi}}\right]~dt\\ 
  &\ge  \sum_{n=0}^{N-1}\int_{J_n}\left[\frac{\alpha}{M^2}\|w'\|_{V'}^2
  -M\|w\|_V\|\tilde{\phi}\|_V\right]~dt \\
  &\ge  \sum_{n=0}^{N-1}\int_{J_n}\left[\frac{\alpha}{M^2}\|w'\|_{V'}^2
  -\frac{M^2}{2\delta}\|w\|_V^2-\frac{\delta}{2}\|\tilde{\phi}\|_V^2\right]~dt \\
  &\ge  \sum_{n=0}^{N-1}\int_{J_n}\left[\left(\frac{\alpha}{M^2}-\frac{C_q^2\delta}{\ns{2}\alpha^2}\right)\|w'\|_{V'}^2
  -\frac{M^2}{\ns{2}\delta}\|w\|_V^2\right]~dt ,
 \end{align*}
 where $\delta>0$ is arbitrary. 
 To estimate $B_\tau(w,\mu w)$, we recall the elementary identity
 \begin{equation}
\label{eq:id00} 
  (\chi^{n,+}-\chi^n,\chi^{n,+})=\frac{1}{2}\|\chi^{n,+}\|^2-\frac{1}{2}\|\chi^{n}\|^2+\frac{1}{2}\|\chi^{n,+}-\chi^n\|^2
\end{equation}
for $\chi\in\mathcal{S}_\tau$ and $n=0,\ldots,N-1$.
That is, we can calculate as
\begin{align}
 B_\tau(w,\mu w)
 &=\sum_{n=0}^{N-1}\int_{J_n}\left[ \frac{\mu}{2} \frac{d}{dt}\|w\|^2+
 \mu\dual{A(t)w,w} \right]~dt \nonumber \\
 &\mbox{ }\qquad +\frac{\mu}{2}\sum_{n=1}^{N-1}\left[\|w^{n,+}\|^2-\|w^{n}\|^2+\|w^{n,+}-w^n\|^2\right]+\mu\|w^{0,+}\|^2 \nonumber \\
 & \ge \sum_{n=0}^{N-1} \int_{J_n}\mu\alpha \|w\|_V^2 ~dt
 +\frac{\mu}{2}\sum_{n=1}^{N-1}\|w^{n,+}-w^n\|^2+\mu\|w^{0,+}\|^2. \label{eq:76}
\end{align}
Summing up, we deduce
 \begin{multline*}
  B_\tau(w,v)
  \ge
  \sum_{n=0}^{N-1}\int_{J_n}\left[\left(\frac{\alpha}{M^2}-\frac{C_q^2\delta}{\ns{2}\alpha^2}\right)\|w'\|_{V'}^2
  + 
  \left(\mu\alpha-\frac{M^2}{\ns{2}\delta}\right)\|w\|_V^2\right]~dt \\
  +\frac{\mu}{2}\sum_{n=1}^{N-1}\|w^{n,+}-w^n\|^2+\mu\|w^{0,+}\|^2.
 \end{multline*}
We take a suitably small $\delta$ and then choose a suitably large
 $\mu$. Consequently, there exists a positive constant $C_2$ depending
 only on $\alpha,M$ and $q$ such that
\[
  B_\tau(w,v)\ge C_2\normx{w}^2.
\] 
 This, together with \eqref{eq:18}, implies
\begin{equation}
\sup_{v\in\mathcal{S}_\tau}\frac{B_\tau(w,v)}{\normyb{v}} \ge C\normx{w} 
\label{eq:77}
\end{equation}
with $C=C_2/C_1$. \ns{Therefore, \eqref{eq:16} is proved.}

We proceed to the case $q=0$. Set $v=\mu w$ with $\mu>0$. Then, we deduce \eqref{eq:77}, because we still have \eqref{eq:18} and \eqref{eq:76}. 
\end{proof}

\begin{proof}[Proof of Theorem \ref{th:0}, \eqref{eq:23z}]
It suffices to prove that 
 \begin{subequations}
  \label{eq:23}
   \begin{gather}
  \exists c_2>0,\quad  \inf_{v_\tau\in\mathcal{S}_\tau}
       \sup_{w_\tau\in\mathcal{S}_\tau}\frac{B_\tau(w_\tau,v_\tau)}{\normyyb{w_\tau}\normxx{v_\tau}}=c_2;
    \label{eq:23a}\\
\ns{w_\tau\in \mathcal{S}_\tau,\quad (\forall v_\tau\in\mathcal{S}_\tau,\ B_\tau(w_\tau,v_\tau)=0)\quad
 \Longrightarrow \quad (w_\tau=0). }
  \label{eq:23b}    
   \end{gather}
 \end{subequations}
Actually, as recalled from the Introduction (i.e., the equivalence \eqref{eq:bnb} and
\eqref{eq:bnb3}), \eqref{eq:23} is equivalent to 
\[
 \inf_{w_\tau\in\mathcal{S}_\tau}
 \sup_{v_\tau\in\mathcal{S}_\tau}\frac{B_\tau(w_\tau,v_\tau)}{\normyyb{w_\tau}\normxx{v_\tau}}=
       \inf_{v_\tau\in\mathcal{S}_\tau}
       \sup_{w_\tau\in\mathcal{S}_\tau}\frac{B_\tau(w_\tau,v_\tau)}{\normyyb{w_\tau}\normxx{v_\tau}}=c_2.
\] 
Let $v_\tau\in \mathcal{S}_\tau$. 
Now, with a suitably large $\mu>0$, we set $w_\tau=-\tilde{\phi}_\tau+\mu v_\tau$ if $q\ge 1$ and $w_\tau=\mu v_\tau$ if $q=0$. Therein, $\tilde{\phi}_\tau\in\mathcal{S}_\tau$ is defined for $\phi_\tau=A^{-1}v_\tau'\in\mathcal{S}_\tau$ in the same way as in the proof of \eqref{eq:16}. 

 Then, in exactly the same way, we deduce
 \[
  B_\tau(w_\tau,v_\tau)\ge C\normxx{v_\tau}^2,\qquad \normyyb{w_\tau}\le C'\normxx{v_\tau},
 \]
and consequently obtain \eqref{eq:23a}. 

\ns{Finally, in order to prove \eqref{eq:23b}, we let $w_\tau\in \mathcal{S}_\tau$ satisfy 
$B_\tau(w_\tau,v_\tau)=0$ for $v_\tau\in\mathcal{S}_\tau$. Choosing $v_\tau=w_\tau$ and using \eqref{eq:0b} and \eqref{eq:id00}, we have 
\[
 \alpha \|w_\tau\|_{L^2(J;V)}^2+\frac{1}{2}\|w^N_\tau\|^2+\frac{1}{2}\|w^{0,+}_\tau\|^2+
\frac{1}{2}\sum_{n=1}^{N-1}\|w^{n,+}_\tau-w_\tau^n\|^2\le 0,
\]
which implies $w_\tau=0$. }
This completes the proof of \eqref{eq:23z}.
\end{proof}



\section{Application to the finite element method}
\label{sec:7}

This section presents application of our results, Theorems \ref{th:0}--\ref{th:2},
to error analysis of the finite element method. 
Letting $\Omega$ be a
polyhedral domain $\mathbb{R}^d$, $d=2,3$, with the boundary
$\partial\Omega$, we consider \ns{a second-order parabolic equation for the function $u=u(x,t)$ of $(x,t)\in\overline{\Omega}\times [0,T]$,
\begin{subequations}
\label{eq:h}
\begin{align}
\partial_t u &=  \partial_j (a_{ij}(x,t)\partial_iu)- a_i(x,t)\partial_i u-a_0(x,t)u+ f(x,t)  &&\textup{in } \Omega \times  J, \label{eq:h1}\\
u &= 0                             &&\textup{on }\partial\Omega\times J, \label{eq:h2}\\
u|_{t=0} &= u_0(x)                 &&\textup{on }\Omega,\label{eq:h3}
\end{align}
\end{subequations}
where $\partial_t u=\partial u/\partial t$ and $\partial_iu=\partial u/\partial x_i$ for $i=1,\ldots,d$. 
(The summation convention is employed throughout.) The coefficient functions $a_{ij}=a_{ji},a_{i}$ are assumed to be essentially bounded in $\Omega\times J$ and satisfy 
\begin{subequations}
\label{eq:ha}
\begin{align}
\exists \alpha >0,\quad a_{ij}(x,t)\xi_i\xi_j&\ge \alpha |\xi|^2_{\mathbb{R}^d}&& (x\in\Omega,~t\in J,~\xi=(\xi_i)\in\mathbb{R}^d);\label{eq:ha1}\\
a_0(x,t)-\frac12 \partial_i a_i(x,t)&\ge 0&& (x\in\Omega,~t\in J),\label{eq:ha2}
\end{align}
\end{subequations}
where $\alpha$ may depend on the final time $T$. Further assumptions will be mentioned later.  
}

We use the Lebesgue spaces $L^p=L^p(\Omega)$, $1\le p\le\infty$, and the
standard Sobolev spaces $H^k=H^{k}(\Omega)$, $k\ge 1$.
Let $H=L^2(\Omega)$ with the inner product $(v,w)=(v,w)_{L^2}$ and norm
$\|v\|=\|v\|_H=\|v\|_{L^2}$. Moreover, set $V=H^1_0=\{v\in H^1\mid v|_{\partial\Omega}=0\}$ with the norm
$\|v\|_V=\|v\|_{H^1_0}=\|\nabla v\|$.
The space $V'$ implies the dual
space $H^{-1}=H^{-1}(\Omega)$ of $H^1_0$ equipped with
the norm $\|v\|_{H^{-1}}=\sup_{w\in H^1_0}(v,w)/\|w\|_{H^1_0}$.  
\ns{We recall Poincar{\' e}'s inequality $\|v\|_{L^2}\le C_{\textup{P}}\|v\|_{H^1_0}$ for $v\in H^1_0$ with a positive constant $C_{\textup{P}}$.} 
The duality pairing between $H^{1}_0$ and $H^{-1}$ is denoted as
$\dual{w,v}={}_{H^{-1}}\dual{w,v}_{H^1_0}$. 
Spaces $\mathcal{X}=L^2(J;H^1_0)\cap
H^1(J;H^{-1})$, $\mathcal{Y}_1=L^2(J;H^1_0)$ and
$\mathcal{Y}=\mathcal{Y}_1 \times H$ are Hilbert spaces equipped with the norms
$\|v\|_{\mathcal{X}}^2=\|v\|_{L^2(J;H^1_0)}^2+\|\partial_t v\|_{L^2(J;H^{-1})}^2$,
$\|v\|_{\mathcal{Y}_1}^2=\|v\|_{L^2(J;H^1_0)}^2$ and
$\|v\|_{\mathcal{Y}}^2=\|v_1\|_{L^2(J;H^1_0)}^2+\|v_2\|^2$, 
respectively. 
 
For $t\in J$, the operator $A(t):H^1_0\to H^{-1}$ and
functional $F$ on $H^1_0$ are
defined as
\ns{
\begin{align*}
  \dual{A(t)w,v}&=a(t;w,v)=\int_\Omega [a_{ij}(x,t)(\partial_i w)(\partial_jv)+a_{i}(x,t)(\partial_i w)v+a_0(x,t)wv] ~dx&&
 (w,v\in H^1_0),\\
 \dual{F,v}&=(f,v)
 && (v\in H^1_0). 
\end{align*}
Using \eqref{eq:ha}, we have 
\begin{subequations} \
\label{eq:ha5}
\begin{align}
& a(t;w,v)\le M\|w\|_{H^1_0}\|v\|_{H^1_0}&&(t\in J,~v\in H_0^1),\label{eq:ha5a}\\
& a(t;v,v)
\ge \alpha \int_\Omega |\nabla v|^2~dx + \int_\Omega \left(a_0-\frac12 \partial_ia_i\right)v^2~dx \ge \alpha \|v\|_{H^1_0}^2&& (t\in J,~v\in H_0^1),\label{eq:ha5b}
\end{align}
\end{subequations}
where $M$ denotes a positive constant depending only on $\|a_{ij}\|_{L^\infty(J;L^\infty)}$, $\|a_i\|_{L^\infty(J;L^\infty)}$ and $C_{\textup{P}}$. 
}

With these interpretations, \eqref{eq:h} is converted into the abstract evolution equation of
\eqref{eq:0}. The weak formulation of \eqref{eq:h} is given as
follows: given
\begin{subequations} 
\label{eq:h1}
\begin{equation}
 f\in  L^2(J;L^2),\qquad u_0\in L^2,
 \label{eq:data}
\end{equation}
find $u\in \mathcal{X}$ such that
\begin{equation}
B(u,v)=\int_J(f,v_1)~dt
 + (u_0,v_2)\qquad (\forall v=(v_1,v_2)\in\mathcal{Y}),
\label{eq:h1a}
\end{equation}
where
\begin{equation}
 B(u,v)=
 \int_J \left[
	 \dual{\partial_tu,v_1}+\ns{a(t;u,v_1)}\right]~dt+(u(0),v_2)
\label{eq:h1b}
\end{equation}
\end{subequations}
for $u\in\mathcal{X}$, $v=(v_1,v_2)\in\mathcal{Y}$.

We proceed to the presentation of the finite element approximation. Let
$\{\mathcal{T}_h\}_h$ be a family of \emph{shape-regular} triangulation
of $\Omega$. The granularity parameter $h$ is defined as
${h=\max_{K\in\mathcal{T}_h}h_K}$, where $h_K$ denotes the
diameter of the circumscribed ball of $K$. For an integer $k\ge 1$, we
introduce the conforming $\mathrm{P}^k$ finite element space
\[
 X_h=X_h^k=\{v\in C^0(\overline{\Omega}) \mid
 v|_K\in\mathcal{P}^k(K;\mathbb{R})\ (\forall K\in\mathcal{T}_h),\
 v|_{\partial\Omega}=0\}\subset H_0^1.
\]

Recall that $\mathcal{S}_\tau$ is defined as \eqref{eq:S}. 
The space--time finite element space is given as
\[
 \mathcal{S}_{h,\tau}=
 \mathcal{S}_\tau^q(X_h,X_h).
\]
It is noteworthy that $\mathcal{S}_{h,\tau}\subset
\mathcal{S}_\tau$.

The dG$(q)$cG$(k)$ method reads: find
$u_{h,\tau}\in\mathcal{S}_{h,\tau}$ such that
\begin{subequations} %
\label{eq:h2}
\begin{equation}
B_{\tau}(u_{h,\tau},v)=
 \int_J(f,v)~dt
 + (u_{0},v^{0,+})\qquad (\forall v\in\mathcal{S}_{h,\tau}),
\label{eq:h2a}
 \end{equation}
where
\begin{equation}
B_{\tau}(u_{h,\tau},v)=  \sum_{n=0}^{N-1}\int_{J_n} \left[
	 (\partial_t u_{h,\tau},v)+\ns{a(t;u_{h,\tau},v)}\right]~dt+
(u^{0,+}_{h,\tau},v^{0,+})
 +\sum_{n=1}^{N-1}(u^{n,+}_{h,\tau}-u^n_{h,\tau},v^{n,+}) .
 \label{eq:h2b} 
\end{equation}
\end{subequations}

\begin{remark}
\label{rem:h1}To avoid unimportant difficulties, we set in \eqref{eq:h2} the same
 initial function $u_0$ as \eqref{eq:h1}. This fact implies that the initial
 value $u_{h,\tau}(0)\in X_h$ of the solution $u_{h,\tau}$ of \eqref{eq:h2} must be
 \[
  u_{h,\tau}(0)=P_hu_0,
 \]
 where $P_h$ denotes the $L^2$ projection of $L^{2}\to X_h$ defined as
 \begin{equation}
  \label{eq:l2p}
   (P_hw-w,v_h)=0\qquad (v_h\in X_h).
 \end{equation}
\end{remark}

In this setting, we have the consistency $B_{\tau}(u-u_{h,\tau},v)=0$ for all
$v\in \mathcal{S}_{h,\tau}$, where $u$ and $u_{h,\tau}$ respectively represent the
solutions of \eqref{eq:h1} and \eqref{eq:h2}. 
Therefore, in exactly the same way as for the proof of Theorems \ref{th:0}
and \ref{th:1}, 
we obtain the nearly best approximation inequalities such as \eqref{eq:100a} and \eqref{eq:100b}. 
Unfortunately, those results are useless for deducing explicit convergence rates directly. \ns{In fact, we used equalities $(I_nu)^{n,+}-u_{\tau}^{n,+}=0$ $(n=0,\ldots,N-1)$ in the proof of Theorem \ref{th:2}. But, we now do not know whether $(I_nu)^{n,+}-u_{h,\tau}^{n,+}$  $(n=0,\ldots,N-1)$ all vanish or not.}

 \ns{Consequently, we come to consider} a space semi-discrete scheme \eqref{eq:110} below. Set
$\mathcal{X}_h=H^1(J;X_h)$, $\mathcal{Y}_{1h}=L^2(J;X_h)$
 and $\mathcal{Y}_h=\mathcal{Y}_{1h}\times X_h$. They equip the norms 
$\|v_h\|_{\mathcal{X}_h}^2=\|\partial_tv_h\|_{L^2(J;X_h')}^2+\|v_h\|_{L^2(J;H^1_0)}^2$, 
 $\|v_{1h}\|_{\mathcal{Y}_{1h}}^2=\|v_{1h}\|_{\mathcal{Y}_1}^2$ and 
 $\|v_{2h}\|_{\mathcal{Y}_h}^2=\|v_{2h}\|_{\mathcal{Y}}^2$. Here and
 hereinafter, we write $\|v\|_{X_h'}=\sup_{\phi_h\in
 X_h}(v,\phi_h)/\|\phi_h\|_{H^1_0}$ for $v\in \ns{L^2}$. It is apparent that
\begin{equation}
 \label{eq:79}
  \|v\|_{X_h'}\le \|v\|_{H^{-1}}\quad (v\in \ns{L^2})
  ,\qquad\|v\|_{\mathcal{X}_h}\le \|v\|_{\mathcal{X}}\quad (v\in \mathcal{X}).
\end{equation}
It is noteworthy that $\mathcal{X}_h\subset \mathcal{X}$,
$\mathcal{Y}_{1h}\subset\mathcal{Y}_1$ and 
$\mathcal{Y}_h\subset\mathcal{Y}$. 
We consider the problem to find $u_h\in\mathcal{X}_h$ such that 
\begin{subequations} 
\label{eq:110}
\begin{equation}
B(u_h,v_h)=\int_J(f,v_{1h})~dt
 + (u_0,v_{2h})\qquad (\forall v_h=(v_{1h},v_{2h})\in\mathcal{Y}_h),
\label{eq:110a}
\end{equation}
where
\begin{equation}
 B(u_h,v_h)=
 \int_J \left[
	 (\partial_tu_h,v_{1h})+\ns{a(t;u_h,v_{1h})}\right]~dt+(u_h(0),v_{2h}).
\label{eq:110b}
\end{equation}
\end{subequations}
Therein, $u_h(0)\in L^2$ in \eqref{eq:110b} is well-defined, because there exists a positive constant $C_{\mathrm{Tr},T}'$ depending only on $T$ such that 
  \begin{equation}
\max_{t\in \overline{J}}\|v_h(t)\| \le C_{\mathrm{Tr},T}'\|v_h\|_{\mathcal{X}_h}\qquad
 (v_h\in \mathcal{X}_h).
  \label{eq:ch1}
  \end{equation}
\ns{
This inequality does not follow directly from \eqref{eq:ch}. 
However, the proof is exactly the same as that of \eqref{eq:ch}.  
}

Introducing\ns{, for $t\in J$,} 
the discrete \ns{elliptic operator} $A_h(t):X_h\to X_h$ by
\[
\ns{ (A_h(t)w_{h},v_h)=a(t;w_h, v_h)}\qquad (w_h,v_h\in X_h),
\]
the problem \eqref{eq:110} is expressed equivalently as
\begin{equation}
 \label{eq:110a}
\frac{d}{dt}u_{h}(t)+\ns{A_h(t)}u_h(t)=P_hf(t)\quad (t\in J);\quad u_h(0)=P_hu_0,
\end{equation}
where $P_h$ denotes the $L^2$ projection defined as \eqref{eq:l2p}.

Because \eqref{eq:h2} is regarded as a time discretization scheme to
\eqref{eq:110}, we can apply Theorem \ref{th:2} directly to obtain the
following lemma. 
Here and hereinafter, we use $C$ to represent general constants independent of $h$ and $\tau$. 

\begin{lemma}
 \label{la:21}
Let $u_{h,\tau}$ and $u_h$ respectively represent the solutions of \eqref{eq:h2} and
 \eqref{eq:110}. If $q\ge 0$ and $\partial_t^{q}u_h\in \mathcal{X}_h$, then 
\begin{equation*}
\left(\sum_{n=0}^{N-1}\|\partial_tu_h-\partial_tu_{h,\tau}\|_{L^2(J_n;X_h')}^{2}\right)^{1/2}
\ns{+\|u_h-u_{h,\tau}\|_{L^2(J;H^1_0)}}
 \le C\tau^{q}
\|\partial_t^{q}u_h\|_{\mathcal{X}_h}.
\end{equation*}
Moreover, if $q\ge 1$ and $\partial_t^{q+1}u_h\in \mathcal{Y}_{1h}$, then  
 \begin{equation}
  \sup_{1\le n\le N}\|u_h(t_n)-u_{h,\tau}(t_n)\|+\|u_h-u_{h,\tau}\|_{L^2(J;H^1_0)}
  \le  C\tau^{q+1}\|\partial_t^{q+1}u_h\|_{\mathcal{Y}_{1h}}.
  \label{eq:121b}
  \end{equation}

If $q=0$ and $\partial_t u_h\in \mathcal{X}_h$, the
 estimate \eqref{eq:121b} holds with replacement
 $\|\partial_t^{q+1}u_h\|_{\mathcal{Y}_{1h}}$ by
 $\|\partial_t u_h\|_{\mathcal{X}_h}$. 
\end{lemma}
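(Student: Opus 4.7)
The plan is to recognize Lemma \ref{la:21} as nothing more than Theorem \ref{th:2} applied to the spatially semi-discrete problem \eqref{eq:110}, with all the abstract ingredients $(H, V, V', A, F, u_0)$ suitably replaced by their discrete counterparts.

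First I would set up the abstract framework at the discrete level by taking $H = X_h$ equipped with the $L^2$ inner product, $V = X_h$ equipped with the $H^1_0$ norm, and $V' = X_h$ equipped with the $X_h'$ norm. Because $X_h$ is finite-dimensional, $V \subset H \subset V'$ densely and continuously, and the duality pairing $\dual{\cdot,\cdot}$ reduces to the $L^2$ inner product on $X_h \times X_h$. The operator to be plugged in is $A_h$, and one verifies at once that
\begin{equation*}
|(A_h w_h, v_h)| = |(\nabla w_h, \nabla v_h)| \le \|w_h\|_{H^1_0}\|v_h\|_{H^1_0},
\qquad (A_h v_h, v_h) = \|v_h\|_{H^1_0}^2,
\end{equation*}
so the hypotheses \eqref{eq:0a}--\eqref{eq:0b} hold with $M=\alpha=1$, independently of $h$.

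Next I would check that the two schemes match up. The semi-discrete problem \eqref{eq:110} is exactly the abstract evolution equation \eqref{eq:0} with the above choices, source $P_h f$ and initial datum $P_h u_0$. On the other hand, since every test function $v \in \mathcal{S}_{h,\tau}$ satisfies $v(t) \in X_h$, the right-hand side of \eqref{eq:h2a} can be rewritten as $\int_J (P_h f, v)\,dt + (P_h u_0, v^{0,+})$, which identifies \eqref{eq:h2} precisely as the dG$(q)$ method \eqref{eq:2} associated with \eqref{eq:110}. Thus $u_h$ plays the role of the continuous solution $u$ and $u_{h,\tau}$ the role of the discrete solution $u_\tau$ in Theorem \ref{th:2}.

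With this identification, the norms also match: $\mathcal{X}_h$, $\mathcal{Y}_{1h}$, $\mathcal{Y}_h$ are exactly the $\mathcal{X}$, $\mathcal{Y}_1$, $\mathcal{Y}$ spaces of the abstract framework, and $L^2(J_n;X_h')$, $L^2(J;H_0^1)$ correspond respectively to $L^2(J_n;V')$, $L^2(J;V)$. Consequently \eqref{eq:101a} and \eqref{eq:101b}, applied with $u \leftarrow u_h$ and $u_\tau \leftarrow u_{h,\tau}$, yield the two estimates stated in Lemma \ref{la:21}. The case $q=0$ with $\partial_t u_h \in \mathcal{X}_h$ follows from the corresponding case in Theorem \ref{th:2} in the same way. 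The constants $c_3, c_4$ from Theorem \ref{th:2} depend only on $\alpha, M, q$; since here $\alpha=M=1$, they become absolute in $h$ and $\tau$, giving the required $h$-independent constant $C$.

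The only point requiring a moment of care is the matching of initial data and source term, which I would address by noting that for test functions $v \in X_h$, one has $(u_0, v) = (P_h u_0, v)$ and $(f, v) = (P_h f, v)$; no real obstacle arises. I expect no essential difficulty since Theorem \ref{th:2} was designed precisely so that $h$-independence in such applications is automatic.
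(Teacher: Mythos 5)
Your proposal is correct and takes essentially the same route as the paper, which likewise obtains Lemma \ref{la:21} by reading \eqref{eq:h2} as the dG$(q)$ scheme for the semi-discrete problem \eqref{eq:110} and invoking Theorem \ref{th:2} directly; your explicit verification that $A_h$ satisfies \eqref{eq:0a}--\eqref{eq:0b} with $M=\alpha=1$ (hence an $h$-independent constant) and that the data match via $P_h$ simply spells out what the paper leaves implicit.
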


\ns{
Below, we study the stability of $u_h$ and error $u-u_h$ in
various norms. First, in order to derive the stability estimates, we need the following lemma.  

  \begin{lemma}
   There exists a positive constant $\gamma$ which is independent of $h$
   such that  
\begin{equation}
\inf_{w_h\in\mathcal{X}_h}\sup_{v_h\in\mathcal{Y}_h}\frac{B(w_h,v_h)}{\|w_h\|_{\mathcal{X}_h}\|v_h\|_{\mathcal{Y}_h}}= \gamma .
    \label{eq:infsup1}
\end{equation}
  \label{la:infsup1}
  \end{lemma}

This result is not new by itself; it is mentioned, for example, in \cite{and13} and \cite{tv16} as for a sightly different setting and assumptions.  
We will present a version of the proof in Appendix \ref{sec:a} for completeness.  
As an application of this lemma, we prove the following result. 
 
     \begin{lemma}
      Letting $q$ be a non-negative integer, we assume that
     \begin{equation}
      \label{eq:ck}
       \partial_t^la_{ij},\
       \partial_t^la_{i},\
       \partial_t^la_{0} \in L^\infty(\Omega\times J)\quad (1\le i,j\le d,~0\le l\le q).
     \end{equation}
 Let $u$ and $u_h$ respectively represent the solutions of \eqref{eq:h1}
    and \eqref{eq:110}. 
    If $u\in H^q(J;H^1_0)$ and $\partial_t^{q+1} u\in L^2(J;H^{-1})$, then we have 
    \begin{equation}
     \|\partial_t^{q+1}u_h\|_{L^2(J;X_h')}^2 + 
\|u_h\|_{H^{q}(J;H^1_0)}^2
      \le           C\left(\|\partial_t^{q+1}u\|_{L^2(J;H^{-1})}^2+
 \|u\|_{H^{q}(J;H^1_0)}^2\right)     .
\label{eq:131b}
\end{equation}
\label{la:23}
     \end{lemma}

\begin{proof}
Because $\mathcal{X}_h\subset \mathcal{X}$ and
    $\mathcal{Y}_h\subset\mathcal{Y}$, we have the consistency
    \begin{equation}
     B(u-u_h,v_h)=0\qquad (v_h\in\mathcal{Y}_h).
     \label{eq:141a}
    \end{equation}
Recall that $u_h(0)$ is chosen as $(u_h(0),v_h)=(u_0,v_h)$ for any $v_h\in X_h$. 
First, we consider the case $q=0$. Using \eqref{eq:infsup1} and \eqref{eq:141a}, we can perform the estimations as 
\[
  \|u_h\|_{\mathcal{X}_h} 
\le \frac{1}{\gamma}\sup_{v_h\in\mathcal{Y}_h}\frac{B(u_h,v_h)}{\|v_h\|_{\mathcal{Y}_h}} 
= \frac{1}{\gamma}\sup_{v_h\in\mathcal{Y}_h}\frac{B(u,v_h)}{\|v_h\|_{\mathcal{Y}_h}} 
\le \frac{1}{\gamma}\|u\|_{\mathcal{X}_h}
\le \frac{1}{\gamma}\|u\|_{\mathcal{X}},
\]
which implies \eqref{eq:131b} for $q=0$. 

We proceed to the case $q\ge 1$. Assuming that \eqref{eq:ck} is sastisfied, we set 
\[
 \Phi_q(w,v)=\sum_{l=0}^{q-1}  \binom{q}{l} \int_Ja^{(q-l)}(t;\partial_t^{l}w,v)~dt
\qquad (w\in H^{q-1}(J;H_0^1),~v\in L^2(J;H_0^1)),
\]
where $\binom{q}{l}={q!}/(l!(q-l)!)$ and 
\[
 a^{(l)}(t;w,v)=\int_\Omega 
\left[
(\partial_t^la_{ij})(\partial_iw)(\partial_jv)
+(\partial_t^la_{i})(\partial_iw)v
+(\partial_t^la_{0})wv
\right]~dt.
\]
It is apparent that 
\begin{equation*}
|\Phi_q(w,v)|\le C \|w\|_{H^{q-1}(J;H_0^1)}\|v\|_{L^2(J;H_0^1)}.
\end{equation*}

Now assume that $u\in H^q(J;H^1_0)$ and $\partial_t^{q+1}u\in L^2(J;H^{-1})$. 
Let 
$v_{1h}\in C_0^\infty(J;X_h)$ and 
$v_{2h}\in X_h$.  
Substituting $v_h=((-1)^q\partial_t^{q}v_{1h},v_{2h})$ for \eqref{eq:141a}, we have by the integration by parts  
    \begin{equation*}
 {B}(\partial_t^{q}u-\partial_t^{q}u_h,v_{h})+\Phi_q(u-u_h,v_{1h})=0.
    \end{equation*}
By the density, the identity holds for any $v_{h}\in
\mathcal{Y}_{h}$. (It is noteworthy that $C_0^\infty(J;X_h)$ is dense in $\mathcal{Y}_{1h}$.)
We apply \eqref{eq:infsup1} again to obtain 
\begin{align*}
  \|\partial_t^{q}u_h\|_{\mathcal{X}_{h}}&\le
 \frac{1}{\gamma}\sup_{v_{h}\in\mathcal{Y}_{h}}\frac{B(\partial_t^{q}u_h,v_{h})-\Phi_q(u-u_h,v_{1h})}{\|v_{h}\|_{\mathcal{Y}_h}}\\
& \le
C
\left( \|\partial_t^{q}u\|_{\mathcal{X}}+\|u-u_h\|_{H^{q-1}(J;H_0^1)}\right)
\end{align*}
and, consequently, 
\begin{multline*}
 \|\partial_t^{q+1}u_h\|_{L^2(J;X_h')}^2+
\|\partial_t^{q}u_h\|_{L^2(J;H_0^1)}^2 \\ 
\le  C\left(\|\partial_t^{q+1}u\|_{L^2(J;H^{-1})}^2+
\|\partial_t^{q}u\|_{L^2(J;H_0^1)}^2
+\|u\|_{H^{q-1}(J;H_0^1)}^2+\|u_h\|_{H^{q-1}(J;H_0^1)}^2
\right).
\end{multline*}
Therefore, we deduce \eqref{eq:131b} by the induction in $q$.  
 \end{proof}

}

 \ns{
Next, we study error estimates for the space semi-discrete scheme \eqref{eq:110a}. 
This subject has a long history. Non-smooth data error estimates have been examined by many researchers; see 
the monographs, \cite{fss01}, \cite{tho06}, for more detail. Recently, some results based on the inf-sup conditions are reported; see \cite{and13} and \cite{tv16} for example. In particular, \cite{tv16} derived the error estimate \eqref{eq:120a} below under slightly different assumptions.  The regularity assumption on $u$ is weaker than ours. On the other hand, they assumed that the $L^2$ projection $P_h$ is $H^1_0$ stable, that is, there exists a constant $c_*>0$ which is independent of $h$ such that 
\begin{equation}
 \label{eq:stab}
  \|P_hv_h\|_{H^1_0}\le c_*\|v_h\|_{H^1_0}\qquad (v_h\in X_h).  
\end{equation}
It is noteworthy that this inequality actually holds if
  $\{\mathcal{T}_h\}_h$ satisfies some shape conditions. 
Inequality \eqref{eq:stab} brings us many useful results. One of that is 
\begin{equation}
 \label{eq:66}
\|v_h\|_{H^{-1}}\le
  c_*^{-1}\|v_h\|_{X_h'}\qquad (v_h\in X_h). 
\end{equation}
That is, $\|\cdot\|_{H^{-1}}$ and $\|\cdot\|_{X_h'}$ are equivalent norms on $X_h$. 
\cite{bj89} assumed \eqref{eq:66} in their analysis of the $hp$ finite element method. 
\cite{and13} considered a version of \eqref{eq:66} in his study on the space-time finite element method.

Below we do not assume \eqref{eq:stab} and derive error estimates for $u_h$. Then, we will mention how our results being improved under \eqref{eq:stab}; see Remark \ref{rem:7}. 

Applying the integration by parts formula (see Theorem 2, \S XVIII-1, of \cite{dl92} for example), 
we see that $u-u_h$ satisfies 
\begin{equation}
\label{eq:a46}
 \int_J [\dual{u-u_h,-\partial_tv_h}+a(t;u-u_h,v_h)]~dt+(u(T)-u_h(T),v_h(T))=0
\qquad (v_h\in\mathcal{X}_h).
\end{equation} 
Motivated by this observation, we set 
\[
      \tilde{B}(w,v)=\int_J [\dual{w_1,-\partial_tv}+a(t;w_1,v)]~dt+(w_2,v(T))
\quad (w=(w_1,w_2)\in\mathcal{Y},v\in\mathcal{X}).
\]
The identity \eqref{eq:a46} implies 
\begin{equation}
\tilde{B}(e_h,v_h)=0\qquad (v_h\in\mathcal{X}_h),
 \label{eq:141h}
\end{equation}
where $e_h=(u-u_h,u(T)-u_h(T))\in\mathcal{X}$. 

The proof of the following lemma is also postponed for Appendix \ref{sec:a}. 
 
\begin{lemma}
   There exists a positive constant $\tilde{\gamma}$ which is independent of $h$
   such that  
\begin{equation}
\inf_{w_h\in\mathcal{Y}_{h}} 
\sup_{v_h\in\mathcal{X}_h}
\frac{\tilde{B}(w_h,v_h)}{\|w_h\|_{\mathcal{Y}_{h}}\|v_h\|_{\mathcal{X}_h}}= \tilde{\gamma}.
    \label{eq:infsup2}
\end{equation}
  \label{la:infsup2}
  \end{lemma}

Using \eqref{eq:79}, \eqref{eq:infsup1}, \eqref{eq:141a}, \eqref{eq:141h} and \eqref{eq:infsup2}, we get
    \begin{align*}
 \|u-u_h\|_{\mathcal{X}_h} & \le C\inf_{w_h\in \mathcal{X}_h}\|u-w_h\|_{\mathcal{X}}, \\
 \|(u-u_h,u(T)-u_h(T))\|_{\mathcal{Y}_h} 
& \le C\inf_{(v_{1h},v_{2h})\in \mathcal{Y}_h}\|(u-v_{1h},u(T)-v_{2h})\|_{\mathcal{Y}} 
    \end{align*}
in the similar way as that used for the proof of Theorem \ref{th:1}. 
On the other hand, we know  
\[
 \inf_{v_h\in X_h}\|w-v_h\|_{L^2} \le Ch^{k}|w|_{H^k},\qquad 
 \inf_{v_h\in X_h}\|w-v_h\|_{H^1_0} \le Ch^{k}|w|_{H^{k+1}}
\]
in view of the standard error estimations for the Lagrange and Scott--Zhang interpolations.   
Combining these results, we obtain the following optimal-order error estimates for the space discretization. For a positive integer $k$, we write  
\[
|v|_{L^2(J;H^{k})}^2=
\int_J\Big(\sum_{|\alpha|=k}\|\partial^\alpha v\|_{L^2}^2\Big)^{1/2}~dt,\quad 
|v|_{L^\infty(J;H^{k})}=\sup_{t\in J}
\Big(\sum_{|\alpha|=k}\|\partial^\alpha v\|_{L^2}^2\Big)^{1/2},
\]
where $\alpha=(\alpha_1,\ldots,\alpha_{d})$ denotes the multi-index. 

  \begin{lemma}
 Let $u$ and $u_h$ respectively represent the solutions of \eqref{eq:h1}
   and \eqref{eq:110}. If $u\in H^1(J;H^{k})\cap L^2(J;H^{k+1})$, we
   have 
   \begin{subequations} %
\label{eq:122}
\begin{equation}
\|\partial_tu-\partial_tu_{h}\|_{L^2(J;X_h')}+\|u-u_{h}\|_{L^2(J;H^1_0)}
 \le Ch^{k} \left(|\partial_tu|_{L^2(J;H^{k})}+|u|_{L^2(J;H^{k+1})}\right).
 \label{eq:122a} 
\end{equation}
Moreover, if $u\in L^2(J;H^{k+1})\cap L^\infty(J;H^{k})$, we have
\begin{equation}
 \|u-u_h\|_{L^\infty(J;L^2)}+\|u-u_{h}\|_{L^2(J;H^1_0)} \le Ch^k
 \left( |u|_{L^\infty(J;H^{k})}+|u|_{L^2(J;H^{k+1})}\right).
 \label{eq:122b}
\end{equation}
\end{subequations}
\label{la:22}
\end{lemma}


Summing up those lemmas, we obtain the following theorem as the final
result of this paper. Let
\begin{align*}
  \|v\|_{Z_1}&=\|\partial_t^{q+1}v\|_{L^2(J;H^{-1})}+\|v\|_{H^q(J;H^{1}_0)}+ |\partial_t v|_{L^2(J;H^{k})}+ |v|_{L^2(J;H^{k+1})},\\
 \|v\|_{Z_2}&=
 \|\partial_t^{q+2}v\|_{L^2(J;H^{-1})}+\|v\|_{H^{q+1}(J;H^{1}_0)}+ |v|_{L^\infty(J;H^{k})}+ |v|_{L^2(J;H^{k+1})}  
\end{align*}
for a sufficiently regular function $v$.

\begin{theorem}
 \label{th:20}
 Letting $k\ge 1$ and $q\ge 0$ be integers, we suppose that $u\in\mathcal{X}$ and $u_{h,\tau}\in\mathcal{S}_{h,\tau}$ are the
 respective solutions of \eqref{eq:h1} and \eqref{eq:h2}. 
Assume, moreover, that the coefficient functions satisfy the condition \eqref{eq:ck} and that $u$ is sufficiently regular that $\|u\|_{Z_1},\|u\|_{Z_2}<\infty$. Then, there exist positive constants $c_5,c_6$ which are independent of $h$ and $\tau$ such that 
\begin{subequations} 
 \label{eq:120}
\begin{align}
\left(\sum_{n=0}^{N-1}\|\partial_tu-\partial_tu_{h,\tau}\|_{L^2(J_n;X_h')}^2\right)^{1/2}
 +\|u-u_{h,\tau}\|_{L^2(J;H^1_0)} &\le c_5(h^k+\tau^{q})\|u\|_{{Z}_1}, \label{eq:120a}\\
\sup_{1\le n\le N}\|u(t_n)-u_{h,\tau}(t_n)\|+ \|u-u_{h,\tau}\|_{L^2(J;H^1_0)}&\le c_6(h^k+\tau^{q+1})\|u\|_{{Z}_2}.\label{eq:120b}
\end{align}
\end{subequations}
\end{theorem}

}

 \begin{remark}
  \label{rem:7}
\ns{We assume $u\in H^1(J;H^{k})\cap L^2(J;H^{k+1})$ in deriving the error estimate \eqref{eq:122a}, while \cite{tv16} assume only $u\in H^1(J;H^{k-1})\cap L^2(J;H^{k+1})$.
} 
If $P_h$ is stable in the $H^1_0$ norm, that is, \eqref{eq:stab} is satisfied, 
the right-hand side of \eqref{eq:122a} is replaced by 
\[
  Ch^{k} \left(|\partial_tu|_{L^2(J;H^{k-1})}+|u|_{L^2(J;H^{k+1})}\right).
\]
In fact, if this is the case, we can apply $\|w-P_hw\|_{H^{-1}}\le Ch^{k}|w|_{H^{k-1}}$ and $\|w-P_hw\|_{H^{1}_0}\le Ch^{k}|w|_{H^{k+1}}$; 
see (2.2) of \cite{ch02} or Theorem 4.1 of \cite{ba72}. 
Moreover, the first estimate \eqref{eq:120a} in Theorem \ref{th:20} is improved as
\begin{equation*}
 \left(\sum_{n=0}^{N-1}\|\partial_tu-\partial_tu_{h,\tau}\|_{L^2(J_n;H^{-1})}^2\right)^{1/2}\ns{+\|u-u_{h,\tau}\|_{L^2(J;H^1_0)}}
  \le c_5'(h^k+\tau^{q})\|u\|_{{Z}_3},
\end{equation*}
where $\|\cdot\|_{Z_3}$ is an obvious modification of $\|\cdot
  \|_{{Z_1}}$. In fact, dividing $u-u_{h,\tau}$ into
  $(u-P_hu)+(P_hu-u_{h,\tau})$ and applying \eqref{eq:66}, we deduce   
  \begin{multline*}
   \left(\sum_{n=0}^{N-1}\|\partial_tu-\partial_tu_{h,\tau}\|_{L^2(J_n;H^{-1})}^2\right)^{1/2}
   \\
   \le
   \left(\sum_{n=0}^{N-1}\|\partial_tu-P_h\partial_tu\|_{L^2(J_n;H^{-1})}^2\right)^{1/2}+
   \frac{1}{c_*}\left(\sum_{n=0}^{N-1}\|\partial_tu-\partial_tu_{h,\tau}\|_{L^2(J_n;X_h')}^2\right)^{1/2}.
  \end{multline*}
 \end{remark}


\paragraph{Acknowledgements.}

\ns{I thank the anonymous reviewers for their valuable comments and
suggestions to improve the quality of the paper.} This work was supported by JST CREST Grant Number JPMJCR15D1, Japan and by JSPS KAKENHI Grant Numbers 15H03635, Japan.


\appendix
\section{Proofs of Lemmas \ref{la:infsup1} and \ref{la:infsup2}}
\label{sec:a}

\ns{
  \begin{proof}[Proof of Lemma \ref{la:infsup1}]
We use an
   equivalent norm to $\|\cdot\|_{\mathcal{X}_h}$ given as  
   \[
   \|v\|_{\mathcal{X}_h,\diamondsuit}^2=
   \int_J \|\partial_tv+A_h(t)v\|_{X_h'}^2~dt+\|v(0)\|^2\qquad (v\in \mathcal{X}_h).
   \]
In fact, we have 
\[
   \alpha \|v\|_{\mathcal{X}_h}\le \|v\|_{\mathcal{X}_h,\diamondsuit}\le
   M'\|v\|_{\mathcal{X}_h}\quad (v\in \mathcal{X}_h),
\]
where $M'$ denotes a positive constant depending only on $M$ and
   ${C}_{\textup{Tr}}'$. 

   Moreover, a discrete version of Lemma \ref{la:10} is available. That
   is, 
   \begin{equation}
    \label{eq:73}
\|A_h(t)^{-1}g\|_{H^1_0}\le \frac{1}{\alpha}\|g\|_{X_h'},\quad 
 (g,A_h(t)^{-1}g)\ge \frac{\alpha}{M^2}\|g\|_{X_h'}^2 \qquad 
      (g\in X_h,~t\in J).
\end{equation}
  
Now let $w\in\mathcal{X}_h$ be arbitrary and set
   $v=(v_{1},v_{2})\in\mathcal{Y}_h$, 
   $v_{1}=A_h(t)^{-1}\partial_tw+w$, $v_{2}=w(0)$. 
   Using \eqref{eq:73}, we can perform the estimations
 \[
    \|v\|_{\mathcal{Y}_h}^2
  =\int_J \|A_h(t)^{-1}[\partial_tw+A_h(t)w]\|_{X_h'}^2~dt+\|w(0)\|^2
  \le \frac{1}{\alpha^2}\|w\|_{\mathcal{X}_h,\diamondsuit}^2,
 \]
   and
   \begin{align*}
    B(w,v)&\ge \frac{\alpha}{M^2}\|w\|_{\mathcal{X}_h,\diamondsuit}^2
    \ge  \frac{\alpha^2}{M^2}\|w\|_{\mathcal{X}_h,\diamondsuit}\|v\|_{\mathcal{Y}_h}.
   \end{align*}
Combining these estimates, we deduce \eqref{eq:infsup1}. 
   \end{proof}
}

\medskip

\ns{
\begin{proof}[Proof of Lemma \ref{la:infsup2}]
The direct proof of \eqref{eq:infsup2} is apparently so difficult
that we take a detour. We will show: 
\begin{subequations} %
\begin{gather}
\exists  \tilde{\beta}>0,\quad
 \inf_{v\in\mathcal{X}_h}\sup_{w\in\mathcal{Y}_{h}}\frac{\tilde{B}(w,v)}{\|w\|_{\mathcal{Y}_{h}}\|v\|_{\mathcal{X}_h}}= \tilde{\beta};
 \label{eq:151i}\\
w\in \mathcal{Y}_h,\quad (\forall v\in\mathcal{X}_{h},\ \tilde{B}(w,v)=0)\quad
 \Longrightarrow \quad (w_h=0). 
 \label{eq:151j}
\end{gather}
\end{subequations}
Then, the general theory engenders \eqref{eq:infsup2}; recall the
equivalence \eqref{eq:bnb} and 
\eqref{eq:bnb3} described in the Introduction.   
   
We use another equivalent norm to $\|\cdot\|_{\mathcal{X}_h}$ given as  
   \[
   \|v\|_{\mathcal{X}_h,\blacklozenge}^2=
   \int_J \|-\partial_tv+A_h(t)^*v\|_{X_h'}^2~dt+\|v(T)\|^2\qquad (v_h\in \mathcal{X}_h),
   \]
where $A_h(t)^*$ denotes the adjoint operator of $A_h(t)$; $(A_h(t)w,v)=(w,A_h(t)^*v)$ for $w,v\in X_h$ and $t\in J$. We have 
$\alpha \|v\|_{\mathcal{X}_h}\le \|v\|_{\mathcal{X}_h,\blacklozenge}\le
   M'\|v\|_{\mathcal{X}_h}$ for $v\in \mathcal{X}_h$. Inequalities \eqref{eq:73} are valid for $A_h(t)^*$. 

The proof of \eqref{eq:151i} is exactly the same as that of \eqref{eq:infsup1};  
we choose $w=(w_{1},w_{2})\in \mathcal{Y}_h$ as $w_{1}=-[A_h(t)^*]^{-1}\partial_tv+v$ and $w_{2}=v(T)$ for a given $v\in\mathcal{X}_h$.   

To prove \eqref{eq:151j}, we assume that $w=(w_{1},w_{2})\in \mathcal{Y}_h$ solves 
\begin{equation}
 \label{eq:151m}
\int_J [(w_1,-\partial_tv)+a(t;w_{1},v)]~dt+(w_2,v_h(T))=0
\quad (v\in \mathcal{X}_h).
\end{equation}

First, considering $v\in\mathcal{X}_h$ such that $v=0$ on $(0,T-\ep)$ and $v(T)=w_2$ for a sufficiently small $\ep>0$, we obtain $w_2=0$. 

Substituting $v=\varphi\in C_0^\infty(J;X_h)$ for \eqref{eq:151m}, we see that 
$w_1\in H^1(J;X_h)$ and $\partial_tw_1=-A_h(t)w_1$. Moreover, because $L^2(J;X_h)$ is dense in $C_0^\infty(J;X_h)$, we have 
\[
 \int_J [(\partial_t w_1,\varphi)+(A_h(t)w_1,\varphi)]~dt=0
\quad (\varphi\in L^2(J;X_h)).
\]
At this stage, let $\eta\in X_h$ be arbitrary. Substituting $\varphi=t\eta\in L^2(J;X_h)$ and applying integration by parts, we deduce
\[
(w_1(T),T\eta)+\int_J [(w_1,-\partial_t (t\eta))+a(t;w_1,t\eta)]~dt=0. 
\]
Thefore, in view of \eqref{eq:151m} and $w_2=0$, we deduce $w_1(T)=0$. 
Now, substituting $v=w_1$ for \eqref{eq:151m}, 
\[
 \frac{1}{2}\|w_1(0)\|^2+\alpha \|w_1\|_{L^2(J;H_0^1)}^2\le 0,
\]
which gives $w_1=0$. This completes the proof.  
\end{proof}
}

\bibliographystyle{plain}


\end{document}